\def\oversortoftilde#1{\mathop{\vbox{\m@th\ialign{##\crcr\noalign{\kern3\p@}%
      \sortoftildefill\crcr\noalign{\kern3\p@\nointerlineskip}%
      $\hfil\displaystyle{#1}\hfil$\crcr}}}\limits}
\def\sortoftildefill{$\m@th \setbox\z@\hbox{$\braceld$}%
  \braceld\leaders\vrule \@height\ht\z@ \@depth\z@\hfill\braceru$}
\newtheorem{thm}{Theorem}[section]
\newtheorem*{maintheorem*}{Main Theorem}
\newtheorem{defn}[thm]{Definition}
\newtheorem{Ex}[thm]{Example} 
\newtheorem{lemma}[thm]{Lemma} 
\newtheorem{proposition}[thm]{Proposition} 
\newtheorem{remark}[thm]{Remark} 
\newtheorem{Cor}[thm]{Corollary}
\newtheorem{ques}[thm]{Question}
\DeclareMathOperator{\Aut}{Aut}
\DeclareMathOperator{\Spec}{Spec}
\newcommand{\cR}{{\mathcal R}^{c}}
\newcommand{\bR}{{\mathcal R}^{b}}
\newcommand{\kk}{\Bbbk}
\def\GKdim{\operatorname{GKdim}}
\def\it{\textit}
\def\gr{\operatorname {gr}}
\numberwithin{equation}{section}
\title{Relative Cancellation}
\author{Hongdi Huang, Zahra Nazemian,  Yanhua Wang, James J. Zhang}
\address{Huang: Department of Mathematics, Shanghai University, Shanghai, 200444, China}
\email{hdhuang@shu.edu.cn}
\address{Nazemian: University of Graz, Heinrichstrasse 36, 8010 Graz, Austria}
\email{zahra.nazemian@uni-graz.at}
\address{Wang: School of Mathematics, Shanghai Key Laboratory of Financial Information Technology, Shanghai University of Finance and Economics, Shanghai 200433, China}
\email{yhw@mail.shufe.edu.cn}
\address{Zhang: Department of Mathematics, Box 354350,University of Washington, Seattle, Washington 98195, USA}
\email{zhang@math.washington.edu}
\subjclass[2010]{16P99, 16W99} 
\keywords{Cancellation problem, relative cancellative 
property, firm domain, filtration, stable subspace}
\begin{document}

%%%%%%%%%%%%%%%%%%%%%%%%%%%%%%%%%%%
\begin{abstract}
We introduce and study a relative 
cancellation property for associative 
algebras. We also prove a characterization 
result for polynomial rings 
which partially answers a question of Kraft.
\end{abstract}
%%%%%%%%%%%%%%%%%%%%%%%%%%%%%%%%%%%

%%%%%%%%%%%%%%%%%%%%%%%%%%%%%%%%%%%

\maketitle

%%%%%%%%%%%%%%%%%%%%%%%%%%%%%%%%%%%
%%%%%%%%%%%%%%%%%%%%%%%%%%%%%%%%%%%
\section*{Introduction}
\label{xxsec0}

Throughout we assume that $\kk$ is a base 
field and algebras are associative algebras 
over $\kk$, though several definitions work 
for other classes of algebras such as 
Poisson algebras.

The cancellation problem has been an active 
topic in both commutative and 
noncommutative settings, and even in the 
Poisson setting \cite{GWY2022}, see the survey 
papers \cite{Gu3-2015,HTW-2024}. Recall that
an algebra $A$ is called {\sf cancellative} 
if for any algebra $B$, an algebra
isomorphism $A[t] \cong B[t]$ implies that
$A\cong B$.  In the commutative case, the 
famous Zariski Cancellation Problem 
(abbreviated as ZCP) asks if the 
commutative polynomial ring 
$\Bbbk [x_1,\dots,x_n]$ is cancellative for 
all $n\geq 1$. It is well-known that $\Bbbk[x_1]$ 
is cancellative by a result of 
Abhyankar-Eakin-Heinzer \cite{AEH-1972}, 
while $\Bbbk[x_1,x_2]$ is cancellative by 
Fujita \cite{Fu-1979} and Miyanishi-Sugie 
\cite{MS-1980} in characteristic zero and 
by Russell \cite{Ru-1981} in positive
characteristic. The original ZCP (for 
$n\geq 3$) was open for many years. In 2014, 
Gupta \cite{Gu2, Gu1} settled the ZCP negatively
in positive characteristic for $n\geq 3$. The ZCP 
in characteristic zero remains open for $n\geq 3$.
In her ICM invited talk in 2022 and AMS Noether 
lecture in 2025, Gupta presented connections 
between the ZCP and several other important 
questions in affine algebraic geometry. Also 
see an earlier survey paper on related topics 
\cite{Kr-1996}. Our main goal is to introduce 
a new cancellative property which is a 
{\sf cancellative property relative 
to a class of algebras}.

Let ${\mathcal R}$ be a given family of algebras
which will be specified. For example, 
${\mathcal R}$ consists of all universal 
enveloping algebras over finite dimensional 
Lie algebras, or ${\mathcal R}$ is 
\eqref{E0.5.4}, \eqref{E0.7.1}, or 
\eqref{E0.8.1}. 

\begin{defn}
\label{xxdef0.1}
Let ${\mathcal R}$ be a family of algebras. 
\begin{enumerate}
\item[(1)]
An algebra $A$ is called 
{\sf ${\mathcal R}$-cancellative} if, for 
any algebra $B$ and any two algebras 
$R, R'\in {\mathcal R}$ of the same 
Gelfand-Kirillov dimension, an algebra 
isomorphism $A\otimes R\cong B\otimes R'$ 
implies that $A\cong B$. The family of all 
${\mathcal R}$-cancellative algebras is 
denoted by $\cR$. 
\item[(2)]
An algebra $A$ is called 
{\sf ${\mathcal R}$-bicancellative} if, for 
any algebra $B$ and any two algebras 
$R, R' \in {\mathcal R}$ of the same 
Gelfand-Kirillov dimension, an algebra 
isomorphism $A\otimes R\cong B\otimes R'$ 
implies that $A\cong B$ and $R\cong R'$. 
The family of all ${\mathcal R}$-bicancellative 
algebras is denoted by $\bR$.
\item[(3)]
Let ${\mathcal C}$ be another family of 
algebras. We say the pair $({\mathcal R}, 
{\mathcal C})$ is {\sf bicancellative} if 
for any two algebras $C,C'\in {\mathcal C}$ 
and any two algebras $R, R' \in {\mathcal R}$, 
an algebra isomorphism $C\otimes R\cong 
C'\otimes R'$ implies that $C\cong C'$ and 
$R\cong R'$.  
\end{enumerate}
\end{defn}

Note that the algebras $R$ and $R'$ in 
Definition \ref{xxdef0.1}(1) may be 
different, see Example \ref{xxexa1.8}, 
which also shows that $\bR\subsetneq \cR$ 
in general. 

It is clear that cancellative in the 
classical sense is equivalent to 
${\mathcal R}$-cancellative where 
${\mathcal R}$ is taken to be the 
singleton $\{\Bbbk[t]\}$. Strong cancellative 
(defined in \cite[Definition 1.1(b)]{BZ1-2017}) 
means ${\mathcal R}$-cancellative where 
${\mathcal R}$ is taken to be the set of 
all commutative polynomial rings 
$\{\Bbbk[t_1,\cdots,t_n]\}_{n\geq 1}$. 
Universally cancellative (defined in 
\cite[Definition 1.1(c)]{BZ1-2017}) can also 
be expressed as ${\mathcal R}$-cancellative 
when $\mathcal R$ consists of all commutative
domains. 
If ${\mathcal R}$ consists of all 
$\Bbbk$-algebras, then there is no algebra 
that is ${\mathcal R}$-cancellative. If 
${\mathcal R}$ is the singleton $\{\Bbbk\}$, 
then every algebra is ${\mathcal R}$-cancellative.
If ${\overline{\mathcal R}}\subseteq {\mathcal R}$, 
then being ${\mathcal R}$-cancellative is a 
stronger property than being  
$\overline{\mathcal R}$-cancellative. However, 
given a  general family ${\mathcal R}$, it is 
not easy to determine whether $\cR$ (or $\bR$) is 
non-empty. 

In this paper we explore the ${\mathcal R}$-cancellative property 
when ${\mathcal R}$ contains several families of 
well-studied algebras which may be noncommutative.  
For example, ${\mathcal R}$ may consist of the 
following
\begin{enumerate}
\item[($\bullet$)]
the $n$th Weyl algebras for all $n\geq 1$,
\item[($\bullet$)]
the universal enveloping algebra of a finite-dimensional Lie
algebra, including all commutative polynomial 
rings,
\item[($\bullet$)]
the Sklyanin algebras,
\item[($\bullet$)]
tensor products of finitely  many algebras 
mentioned above.
\end{enumerate}

Now we introduce more definitions.

\begin{defn}
\label{xxdef0.2}
An algebra $A$ is called a {\sf firm domain} if 
for every domain $B$, $A\otimes B$ is a domain.
\end{defn}

If $\Bbbk$ is algebraically closed, then every 
commutative domain is a firm domain by Lemma 
\ref{xxlem1.1}(1), see the proof of 
\cite[Lemma 1.1]{RS-2013}. The algebras listed 
before  Definition  \ref{xxdef0.2} are also firm 
domains. But not every division algebra is a firm 
domain \cite[Theorem 4.6]{RS-2013}.

The main tool in this paper is 
${\mathbb N}$-filtration which we recall 
below. Let $A$ be an algebra. An 
${\mathbb N}$-filtration of $A$ is an ascending 
sequence of subspaces 
${\mathbf F}:=\{F_i(A)\subseteq A\}_{i\geq 0}$ 
satisfying 
\begin{enumerate}
\item[(1)]
$\Bbbk \subseteq F_i(A)\subseteq F_{i+1}(A)$ 
for all $i\geq 0$,
\item[(2)]
$A=\bigcup_{i} F_i(A)$,
\item[(3)]
$F_i(A) F_{j}(A)\subseteq F_{i+j} (A)$ for all 
$i,j\geq 0$.
\end{enumerate}
We say a filtration ${\mathbf F}$ is {\sf trivial} 
if $F_0(A)=A$.
Given a filtration ${\mathbf F}$, the associated graded ring is 
defined to be
$$\gr_{\mathbf F}(A)=
\bigoplus_{i=0}^{\infty} F_{i}(A)/F_{i-1}(A),$$
where $F_{-1}(A)=0$ by convention.
We also drop the subscript when the filtration is clear.  

We say 
${\mathbf F}$ is a {\sf good filtration} if 
$\gr A$ is a domain (it is necessary that $A$ 
itself is a domain) and ${\mathbf F}$ is a {\sf firm 
filtration} if $\gr A$ is a firm domain.
Let $P$ be an algebra property. For an 
algebra $A$, set

\begin{equation}
\notag%\label{E0.2.1}\tag{E0.2.1}
    \Phi_P (A) := \{F | F\,\, \textnormal{is an}\,\,{\mathbb N}\textnormal{-filtration such that}\, \gr_{\textbf{F}} A \, \textnormal{ satisfies property }\, P\}
\end{equation}

The next set of invariants are dependent on the 
set $\Phi_{P}(A)$.

\begin{defn}
\label{xxdef0.3}
Let $A$ be an algebra and let $P$ be an algebra 
property. The set of {\sf universally degree $0$ 
elements} of $A$ is defined to be
$$u_{P}(A): = \bigcap_{{\mathbf F}\in \Phi_{P}(A)}
F_0(A),$$
where the intersection $\bigcap$ runs over all 
filtrations in  $\Phi_{P}(A)$.
\end{defn}

It is clear that $u_{P}(A)$ is a subalgebra of $A$.

\begin{defn}
\label{xxdef0.4}
Let $A$ be an algebra and let $P$ be an algebra 
property. 
\begin{enumerate}
\item[(1)]
We say $A$ is {\sf $u_P$-maximal} if 
$u_P(A)=A$, or equivalently, $\Phi_{P}(A)$ 
consists of the trivial filtration only. 
\item[(2)]
We say $A$ is {\sf $u_P$-minimal} if $u_{P}(A)=\Bbbk$,
or equivalently,  elements in $A\setminus \Bbbk$ 
are not of universally degree 0. 
\end{enumerate}
\end{defn}

The property $P$ that we consider in this manuscript is being a domain together with possibly additional hypotheses.
The behavior of an algebra with respect to filtrations whose associated graded algebra has this property 
can vary significantly. 

For example, if $P$ is the property of being a domain, then the algebras in Corollary \ref{xxcor1.4} are 
$u_{P}$-maximal and therefore do not admit any nontrivial filtration whose associated graded algebra is a domain. 
On the other hand, all algebras listed before Definition \ref{xxdef0.2} are $u_{P}$-minimal.

Gelfand-Kirillov dimension (GK-dimension) will be 
used as an effective tool. We refer to the books 
\cite{KL-2000, MR-2001} for basic definitions and 
properties of GK-dimension. For two algebras $A$ 
and $B$, $\GKdim (A\otimes B)\leq \GKdim A+\GKdim B$ 
holds in general \cite[Proposition 8.2.3]{MR-2001}. 

\begin{defn}
\label{xxdef0.5}
Let $R$ be an algebra of finite GK-dimension. We 
say $R$ is {\sf $\GKdim$-additive} if for every 
algebra $A$, $\GKdim (A\otimes R)=\GKdim A+\GKdim R$.
\end{defn}

By \cite[Proposition 8.2.7 and Corollary 8.2.15]{MR-2001}, 
the commutative polynomial rings, the Weyl algebras, 
the (skew) Laurent polynomial rings 
$\Bbbk_{\mathbf p}[x_1^{\pm 1},\cdots,x_{n}^{\pm 1}]$, 
and the universal enveloping algebra over a finite 
dimensional Lie algebra  are $\GKdim$-additive. Examples of relative cancellative algebras  with respect to several  important classes are provided in this paper.

Let $firm$ denote the property of being a firm domain 
and $good$ denote the property of being a domain. Let 
$A$ be an algebra. Recall that 
\begin{equation}
\label{E0.5.1}\tag{E0.5.1}
\Phi_{good} (A) := \{F | F\,\, \textnormal{is a good}\,\, \mathbb N\textnormal{-filtration of}\, A\}, 
\end{equation}
and 
\begin{equation}
\label{E0.5.2}\tag{E0.5.2}
\Phi_{firm} (A) := \{F | F\,\, \textnormal{is a firm}\,\, \mathbb N\textnormal{-filtration of}\, A\} 
\end{equation}

By Definition \ref{xxdef0.3}, we have 
\begin{equation}
\label{E0.5.3}\tag{E0.5.3}
u_{good}(A): = \bigcap_{{\mathbf F}\in \Phi_{good}(A)}
F_0(A), \quad {\text{and}} \quad
u_{firm}(A): = \bigcap_{{\mathbf F}\in \Phi_{firm}(A)}
F_0(A).\end{equation}

\subsection*{Class ${\mathcal R}_{1}$:} The first class of relative cancellative algebras  
involves the following family of algebras:
\begin{equation}
\label{E0.5.4}\tag{E0.5.4}
{\mathcal R}_{1} :=
{\text{the family of $\GKdim$-additive 
firm domains that are $u_{firm}$-minimal}.} 
\end{equation}

Note that this ${\mathcal R}_{1}$ contains 
all algebras given before Definition \ref{xxdef0.2}. 

\begin{thm}[Theorem \ref{xxthm2.4}(1,2)]
\label{xxthm0.6}
Let ${\mathcal R}_{1}$ be defined as above. 
Suppose $A$ is a domain of finite GK-dimension 
that is $u_{good}$-maximal.
\begin{enumerate}
\item[(1)]
$A$ is ${\mathcal R}_{1}$-cancellative.
\item[(2)]
If $A$ has an ideal of codimension one, then 
$A$ is ${\mathcal R}_{1}$-bicancellative.
\end{enumerate}
\end{thm}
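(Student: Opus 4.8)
The plan is to recover $A$ from $A\otimes R$ by means of the intrinsic invariant $u_{good}$, and the workhorse will be a product formula: for every domain $D$ and every $R''\in\mathcal R_1$ I would prove $u_{good}(D\otimes R'')=u_{good}(D)$, where $u_{good}(D)$ is identified with $u_{good}(D)\otimes\kk$. For the inclusion $u_{good}(D)\subseteq u_{good}(D\otimes R'')$ I would restrict an arbitrary good filtration of $D\otimes R''$ to the subalgebra $D\otimes 1$; the associated graded of the restriction embeds as a graded subring of $\gr(D\otimes R'')$ and is therefore a domain, so the restricted filtration is again good and its degree-zero part contains $u_{good}(D)$. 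For the reverse inclusion I would exhibit two families of good filtrations of $D\otimes R''$: tensoring a good filtration of $D$ with the trivial filtration of $R''$ (legitimate since $\gr D\otimes R''$ is a domain, $R''$ being a firm domain), and tensoring the trivial filtration of $D$ with a firm filtration of $R''$ (legitimate since $D\otimes\gr R''$ is a domain). Intersecting degree-zero parts over the first family gives $u_{good}(D)\otimes R''$ and over the second gives $D\otimes u_{firm}(R'')=D\otimes\kk$, using that $R''$ is $u_{firm}$-free; the intersection of these two subspaces is exactly $u_{good}(D)\otimes\kk$. Here I use that $D\otimes(-)$ commutes with arbitrary intersections of subspaces, which holds over a field. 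Note this formula needs only that $R''$ is a $u_{firm}$-free firm domain; $\GKdim$-additivity is not yet used.

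Granting the formula, part (1) runs as follows. From $A\otimes R\cong B\otimes R'$ the algebra $B$ is a domain (a subalgebra of the domain $A\otimes R$), so the formula applies to both sides and, since $A$ is $u_{good}$-trivial and $u_{good}$ is preserved by algebra isomorphisms, yields $A=u_{good}(A\otimes R)\cong u_{good}(B\otimes R')=u_{good}(B)$. Thus $A$ is isomorphic to the subalgebra $u_{good}(B)\subseteq B$, and the crux is to promote this to $A\cong B$, i.e.\ to show that $B$ is itself $u_{good}$-trivial. I would argue by contradiction from a nontrivial good filtration of $B$: then $\gr B$ is a graded domain with a nonzero homogeneous element $\bar b$ of positive degree, and a direct growth estimate (the subspaces $V^{m}\bar b^{\,i}$ sit in distinct degrees and right multiplication by $\bar b^{\,i}$ is injective, $\gr B$ being a domain) gives $\GKdim\gr B\geq\GKdim F_0(B)+1$. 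Combined with the general inequality $\GKdim\gr B\leq\GKdim B$, this forces $\GKdim F_0(B)\leq\GKdim B-1$. But $\GKdim A=\GKdim B$ by the $\GKdim$-additivity of $R,R'$ together with $\GKdim R=\GKdim R'$, and $u_{good}(B)\subseteq F_0(B)$ with $u_{good}(B)\cong A$, so $\GKdim B=\GKdim A\leq\GKdim F_0(B)\leq\GKdim B-1$, a contradiction. Hence $u_{good}(B)=B$ and $A\cong B$. This step is where I expect the real difficulty to lie, and finiteness of $\GKdim A$ is exactly what makes the contradiction bite.

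For part (2) I would first use part (1) to replace the given isomorphism by one of the form $\psi\colon A\otimes R\xrightarrow{\ \cong\ }A\otimes R'$. Since $u_{good}$ is intrinsic, $\psi$ carries $u_{good}(A\otimes R)=A\otimes 1$ onto $u_{good}(A\otimes R')=A\otimes 1$, so it restricts to an automorphism $\alpha$ of $A$. Let $\mathfrak m$ be the given codimension-one ideal, so $A/\mathfrak m\cong\kk$. Because $1\in R$, the ideal of $A\otimes R$ generated by $\mathfrak m\otimes 1$ equals $\mathfrak m\otimes R$, with quotient $(A/\mathfrak m)\otimes R\cong R$; likewise $\alpha(\mathfrak m)$ is a codimension-one ideal and the ideal generated by $\alpha(\mathfrak m)\otimes 1$ in $A\otimes R'$ is $\alpha(\mathfrak m)\otimes R'$, with quotient $R'$. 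As $\psi$ sends the generator $\mathfrak m\otimes 1$ to $\alpha(\mathfrak m)\otimes 1$, it matches these two ideals and descends on quotients to the desired isomorphism $R\cong R'$. The essential new ingredient here is only the codimension-one ideal, which supplies the augmentation needed to read off $R$ and $R'$ as quotients.
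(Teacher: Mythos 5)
Your proposal is correct and is essentially the paper's own argument: your product formula $u_{good}(D\otimes R'')=u_{good}(D)$ is exactly Proposition 2.3(1,3) (proved there by restricting filtrations to subalgebras and by good-tensor-firm product filtrations), your GK-dimension growth estimate is Lemma 1.5, and your codimension-one-ideal step in part (2) matches the paper's proof of Theorem 2.4(2). The only differences are cosmetic: the paper applies the growth estimate inside $A\otimes R$ (showing $\phi^{-1}(B)\subseteq u_{good}(A\otimes R)=A$) rather than inside $B$, which lets it work directly in the filtered algebra and avoid the auxiliary inequality $\GKdim \gr B\leq \GKdim B$ that your version invokes without proof --- that inequality is in fact true for the $\mathbb{N}$-filtrations used here, but you should prove it or cite it, or simply run the distinct-degrees argument in $B$ itself as Lemma 1.5 does.
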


We have an immediate consequence.

\begin{Cor}
\label{xxcor0.7}
Suppose $\Bbbk$ is algebraically closed.
If $A$ is an affine commutative domain of 
GK-dimension one that is $u_{good}$-maximal
{\rm{(}}or equivalently, not $u_{good}$-minimal{\rm{)}}, 
then $A$ is ${\mathcal R}_{1}$-bicancellative.
\end{Cor}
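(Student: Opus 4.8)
The plan is to deduce the corollary directly from Theorem~\ref{xxthm0.6}(2), whose three hypotheses are that $A$ is a domain of finite GK-dimension, that $A$ is $u_{good}$-trivial, and that $A$ has an ideal of codimension one. The first is automatic, since an affine commutative domain of GK-dimension one has finite GK-dimension (equal to $1$); the second is assumed. Thus the only genuine input needed is the existence of an ideal of codimension one, after which Theorem~\ref{xxthm0.6}(2) applies and gives that $A$ is $\mathcal R_1$-bicancellative.

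To produce such an ideal I would use that $\Bbbk$ is algebraically closed together with the affineness of $A$. Since $A\neq 0$ it has a maximal ideal $\mathfrak m$, and by the weak Nullstellensatz the residue field $A/\mathfrak m$ is a finite extension of $\Bbbk$, hence equals $\Bbbk$; therefore $\dim_{\Bbbk}(A/\mathfrak m)=1$ and $\mathfrak m$ is an ideal of codimension one. This is the one place where algebraic closedness is used, and it is what places the commutative GK-dimension-one case within the scope of Theorem~\ref{xxthm0.6}(2).

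It remains to justify the parenthetical equivalence that, for such $A$, being $u_{good}$-trivial is the same as being not $u_{good}$-free. One direction is immediate: since $\GKdim A=1>0$ we have $A\neq\Bbbk$, so $u_{good}(A)=A$ forces $u_{good}(A)\neq\Bbbk$. For the converse, suppose $A$ is not $u_{good}$-free and choose $a\in u_{good}(A)\setminus\Bbbk$; as $A$ is a domain and $\Bbbk$ is algebraically closed, $a$ is transcendental over $\Bbbk$, and by definition $a\in F_0(A)$ for every $\mathbf F\in\Phi_{good}(A)$, whence $\Bbbk[a]\subseteq F_0(A)$. Because $\GKdim A=1$ equals the transcendence degree of $\operatorname{Frac}(A)$ over $\Bbbk$, an arbitrary $b\in A$ is algebraic over $\Bbbk(a)$ and satisfies $\sum_{i=0}^{n}p_i(a)\,b^i=0$ for some $p_i\in\Bbbk[a]$ with $p_n\neq 0$ and $n\geq 1$. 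Fixing a good filtration $\mathbf F$ and writing $d$ for the filtration degree of $b$, I would observe that each $p_i(a)\in F_0(A)$ has filtration degree $0$, while $\gr A$ being a domain gives filtration degree $id$ for $p_i(a)b^i$ when $p_i\neq 0$. If $d\geq 1$ the term $p_n(a)b^n$ then has filtration degree $nd$, strictly larger than that of every other term, so it cannot be cancelled in the relation $\sum_i p_i(a)b^i=0$; this contradiction forces $d=0$, i.e.\ $b\in F_0(A)$. As $\mathbf F$ was arbitrary, $b\in u_{good}(A)$, and hence $u_{good}(A)=A$.

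The step I expect to be the main obstacle is this last degree computation, where two points must be handled with care: that the coefficients $p_i(a)$ genuinely lie in $F_0(A)$—which is exactly what membership of the transcendental $a$ in the universally degree-$0$ subalgebra provides—and that passing to the domain $\gr A$ keeps the leading term of $p_n(a)b^n$ nonzero so that it cannot be absorbed by lower-degree terms. Once this is in place, the remaining ingredients (finite GK-dimension, the assumed $u_{good}$-triviality, and the Nullstellensatz construction of a codimension-one ideal) are routine, and the corollary follows from Theorem~\ref{xxthm0.6}(2).
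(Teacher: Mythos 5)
Your proposal is correct, and its skeleton---verify the three hypotheses of Theorem \ref{xxthm0.6}(2) and apply it---is exactly the paper's: the paper's proof cites Proposition \ref{xxpro1.6} for the dichotomy (a domain of GK-dimension one over an algebraically closed field is either $u_{good}$-trivial or $u_{good}$-free, which is what makes the parenthetical equivalence true), notes that an affine commutative algebra over an algebraically closed field has an ideal of codimension one, and invokes Theorem \ref{xxthm2.4}(2). Where you genuinely differ is in how you establish the equivalence. Instead of citing Proposition \ref{xxpro1.6}---whose proof runs through Lemma \ref{xxlem1.5}, i.e., the observation that an element $z \notin F_0(A)$ forces $\GKdim A \geq \GKdim u_{good}(A) + 1$ because $\sum_{n} z^n\, u_{good}(A)$ is a direct sum---you pick a transcendental $a \in u_{good}(A)\setminus \Bbbk$, use $\GKdim A = 1 = \operatorname{trdeg}_{\Bbbk}\operatorname{Frac}(A)$ to see that every $b \in A$ satisfies $\sum_{i=0}^{n} p_i(a) b^i = 0$ with $p_i \in \Bbbk[a]$, $p_n \neq 0$, $n \geq 1$, and then run a leading-term argument in $\gr A$. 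That last argument is precisely the paper's Proposition \ref{xxpro1.3} ($A$ integral over $u_P(A)$, in the paper's weak sense of integral, implies $A$ is $u_P$-trivial), which you have re-proved rather than cited; your coefficients $p_i(a)$ lie in $u_{good}(A)\subseteq F_0(A)$, so $A$ is integral over $u_{good}(A)$ and Proposition \ref{xxpro1.3} applies verbatim. So your route trades the GK-dimension counting of Lemma \ref{xxlem1.5} for commutative-algebra input (transcendence degree equals GK-dimension for affine domains) plus the Proposition \ref{xxpro1.3} mechanism; it is more self-contained and elementary, but confined to the commutative affine case, whereas Proposition \ref{xxpro1.6} covers arbitrary, possibly noncommutative, domains of GK-dimension one. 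Your Nullstellensatz construction of the codimension-one maximal ideal is exactly what the paper leaves implicit in the phrase ``since $A$ is affine and commutative.''
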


Note that a domain of GK-dimension one is 
either $u_{good}$-maximal or $u_{good}$-minimal 
[Proposition \ref{xxpro1.6}]. 

\subsection*{Class ${\mathcal R}_{2}$: }
The second class of our examples involves the following 
class of algebras
\begin{equation}
\label{E0.7.1}\tag{E0.7.1}
{\mathcal R}_{2} :=
{\text{the family of $\GKdim$-additive 
domains that are $u_{good}$-minimal}.}
\end{equation}

So ${\mathcal R}_{2}$ is strictly bigger than 
${\mathcal R}_{1}$. Therefore 
${\mathcal R}_{2}$-cancellative algebras are 
${\mathcal R}_{1}$-cancellative (but not the 
converse). Note that if $A$ is a commutative 
domain over an algebraically closed field, 
then every good filtration of $A$ is a firm 
filtration. 

\begin{thm}[Theorem \ref{xxthm2.5}(1,2)]
\label{xxthm0.8}
Let ${\mathcal R}_{2}$ be defined as in 
\eqref{E0.7.1}. Suppose $A$ is a 
$u_{good}$-maximal firm domain of finite 
GK-dimension.
\begin{enumerate}
\item[(1)]
$A$ is ${\mathcal R}_{2}$-cancellative.
\item[(2)]
If $A$ has an ideal of codimension one, then 
$A$ is ${\mathcal R}_{2}$-bicancellative.
\end{enumerate}
\end{thm}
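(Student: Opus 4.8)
The plan is to mirror the proof of Theorem~\ref{xxthm0.6}, interchanging the roles of ``firm'' and ``domain'': now $A$ carries the firmness while $R,R'$ are only assumed to be $u_{good}$-free, $\GKdim$-additive domains. Fix an isomorphism $\phi\colon A\otimes R\to B\otimes R'$ with $R,R'\in{\mathcal R}_{2}$ and $\GKdim R=\GKdim R'$. First the dimension bookkeeping: since $R,R'$ are $\GKdim$-additive (Definition~\ref{xxdef0.5}) and $A$ has finite GK-dimension, $\GKdim(A\otimes R)=\GKdim A+\GKdim R$ is finite, hence so is $\GKdim(B\otimes R')=\GKdim B+\GKdim R'$; comparing and cancelling $\GKdim R=\GKdim R'$ gives $\GKdim A=\GKdim B<\infty$. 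Moreover $A\otimes R$ is a domain because $A$ is firm and $R$ is a domain (Definition~\ref{xxdef0.2}), so $B\otimes R'$, and hence its subalgebra $B$, are domains too.

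The heart of the matter is the identity $u_{good}(A\otimes R)=A\otimes\Bbbk\ (\cong A)$. For $\subseteq$, I would lift an arbitrary good filtration ${\mathbf G}$ of $R$ to the filtration $F_i:=A\otimes G_i(R)$ of $A\otimes R$; here $\gr_{\mathbf F}(A\otimes R)=A\otimes\gr_{\mathbf G}R$ is a domain precisely because $A$ is a firm domain and $\gr_{\mathbf G}R$ is a domain, so ${\mathbf F}$ is good with $F_0=A\otimes G_0(R)$. Intersecting over all good filtrations of $R$, using that $A\otimes(-)$ commutes with intersections of subspaces together with $u_{good}(R)=\Bbbk$ (as $R$ is $u_{good}$-free), gives $u_{good}(A\otimes R)\subseteq A\otimes u_{good}(R)=A\otimes\Bbbk$. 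For $\supseteq$, I would restrict an arbitrary good filtration ${\mathbf F}$ of $A\otimes R$ to $A$ by $F_i(A):=F_i(A\otimes R)\cap A$; the induced map $\gr A\hookrightarrow\gr_{\mathbf F}(A\otimes R)$ is injective, so $\gr A$ is a domain and this filtration is good. Since $A$ is $u_{good}$-trivial it must be trivial, i.e. $A\otimes\Bbbk\subseteq F_0(A\otimes R)$; intersecting yields $A\otimes\Bbbk\subseteq u_{good}(A\otimes R)$.

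As $u_{good}$ is an isomorphism invariant, $\phi$ carries $A\otimes\Bbbk=u_{good}(A\otimes R)$ onto $u_{good}(B\otimes R')$, producing an isomorphism $A\cong u_{good}(B\otimes R')$. For part~(2), a codimension-one ideal $\mathfrak m\subsetneq A$ (so $A/\mathfrak m\cong\Bbbk$) is a codimension-one ideal of $u_{good}(A\otimes R)$, and
\[
(A\otimes R)\big/\big(\mathfrak m\otimes R\big)\cong(A/\mathfrak m)\otimes R\cong R,
\]
so $R$ is recovered intrinsically as the quotient of $A\otimes R$ by the ideal generated by a codimension-one ideal of its universally-degree-zero subalgebra. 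Once $B$ is identified with $u_{good}(B\otimes R')$ (see below), computing the corresponding quotient on the $B$-side produces $R'$, whence $R\cong R'$; combined with $A\cong B$ this yields ${\mathcal R}_{2}$-bicancellativity.

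The main obstacle is exactly the identification $u_{good}(B\otimes R')\cong B$. The computation above is not symmetric in $A$ and $B$: it uses firmness of $A$ to lift good filtrations of $R$, and $u_{good}$-triviality of $A$ to collapse the induced filtration, whereas $B$ is a priori only a domain. Indeed $u_{good}(-\otimes R')$ need not recover an arbitrary domain---for instance $u_{good}(\Bbbk[x]\otimes\Bbbk[y])=\Bbbk\not\cong\Bbbk[x]$---so one cannot simply rerun the argument on the right-hand side. The delicate point is therefore to show that the particular $B$ arising here inherits both structural properties, being a firm domain and being $u_{good}$-trivial, so that $u_{good}(B\otimes R')=B\otimes\Bbbk$. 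I expect this to be extracted from the facts that the recovered subalgebra $\phi(A\otimes\Bbbk)\cong A$ is itself a firm, $u_{good}$-trivial domain sitting inside $B\otimes R'$ and that $\GKdim A=\GKdim B$, which together with the rigidity of the $u_{good}$-decomposition should pin $B$ down.
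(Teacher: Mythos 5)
Your reduction of part~(1) to the identity $u_{good}(A\otimes R)=A\otimes\Bbbk$ is correct and matches the paper's first step; your proof of that identity (lift each good filtration ${\mathbf G}$ of $R$ to $A\otimes G_\bullet(R)$ using firmness of $A$, and restrict good filtrations of $A\otimes R$ back to $A$ using $u_{good}$-triviality) is a clean variant of the paper's appeal to Proposition~\ref{xxpro2.3}(1) applied to $R\otimes A$. The GK-dimension bookkeeping and the codimension-one quotient argument for part~(2) are also exactly the paper's.

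The genuine gap is precisely where you stop: you never prove that $\phi$ carries $A$ onto $B$, and the route you propose for closing this --- showing that $B$ itself is a firm, $u_{good}$-trivial domain, so that $u_{good}(B\otimes R')=B\otimes\Bbbk$ --- is both unproven and an overshoot. Indeed, $u_{good}$-triviality of $B$ is not extractable from your data; given what you have already established, it is essentially equivalent to the conclusion $B\cong A$, so this plan is circular. The paper needs much less. It first establishes only the one-sided inclusion $u_{good}(B\otimes R')\subseteq u_{firm}(B)\subseteq B$, by applying Proposition~\ref{xxpro2.3}(1) to $R'\otimes B$ with $B$ in the firm-domain slot (the citation of part~(3) there is a slip, and, as you rightly suspected, firmness of $B$ is used there without comment); this yields $\phi(A)\subseteq B$ but no equality. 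The surjectivity then comes from a different tool that your proposal never invokes, namely Lemma~\ref{xxlem1.5}: put $Y=A\otimes R$, $X=A=u_{good}(Y)$, and $Z=\phi^{-1}(B)$; then $X\subseteq Z$, $X\subseteq H_0(Y)$ for every good filtration ${\mathbf H}$ of $Y$, and $\GKdim Z=\GKdim B=\GKdim A=\GKdim X<\infty$, so Lemma~\ref{xxlem1.5} forces $Z\subseteq H_0(Y)$ for every such ${\mathbf H}$, whence $Z\subseteq u_{good}(Y)=A$ and $\phi(A)=B$. No triviality or firmness of $B$ enters this step. In short, the missing idea is the GK-dimension growth argument of Lemma~\ref{xxlem1.5} (exactly as in the proof of Theorem~\ref{xxthm2.4}(1)), which upgrades the one-sided inclusion $\phi(A)\subseteq B$ to equality; without it, and without a proof of your claimed structural transfer to $B$, your argument does not reach $A\cong B$, and part~(2), which presupposes $\phi(A)=B$, is left hanging as well.
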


\subsection*{ Classes ${\mathcal R}_{3}$ and ${\mathcal C}_{3}$:}
Consider the class
\begin{equation}
\label{E0.8.1}\tag{E0.8.1}
{\mathcal R}_{3} :=
{\text{the family of affine commutative 
domains that are $u_{good}$-maximal,}}
\end{equation}
and 
\begin{equation}
\label{E0.8.2}\tag{E0.8.2}
{\mathcal C}_{3} :=
{\text{the family of algebras whose centers 
are $u_{good}$-minimal domains.}}
\end{equation}

\begin{thm}[Theorem \ref{xxthm2.6}(1)]
\label{xxthm0.9}
Suppose $\Bbbk$ is algebraically closed.
Let ${\mathcal R}_{3}$ and ${\mathcal C}_{3}$ 
be defined as above. Then $({\mathcal R}_{3}, 
{\mathcal C}_{3})$ is a  bicancellative pair.
\end{thm}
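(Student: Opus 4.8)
The plan is to recover both tensor factors canonically from the product $C\otimes R$, first using the center to reduce to a purely commutative problem and then using $u_{good}$ to separate the two commutative domains. Write $S=Z(C)$ and $S'=Z(C')$, which are $u_{good}$-free domains by the definition of ${\mathcal C}_3$, and recall that over a field $Z(C\otimes R)=Z(C)\otimes Z(R)=S\otimes R$ since $R$ is commutative. Hence any algebra isomorphism $\phi\colon C\otimes R\xrightarrow{\sim}C'\otimes R'$ restricts to an isomorphism of centers $S\otimes R\xrightarrow{\sim}S'\otimes R'$, and the task becomes to separate $S\otimes R$ from $S'\otimes R'$ and then climb back up to $C$ and $C'$.

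The key step is the identity $u_{good}(S\otimes R)=1\otimes R$, which I would prove by a two-sided containment. For $1\otimes R\subseteq u_{good}(S\otimes R)$, take any good filtration $\mathbf F$ of $S\otimes R$ and form the induced filtration $G_i=(1\otimes R)\cap F_i$ on $1\otimes R$; since $\gr_{\mathbf G}(1\otimes R)$ injects into the domain $\gr_{\mathbf F}(S\otimes R)$, the filtration $\mathbf G$ lies in $\Phi_{good}(R)$, and $u_{good}$-triviality of $R$ forces $\mathbf G$ to be trivial, i.e.\ $1\otimes R\subseteq F_0$. For the reverse containment, take any good filtration $\mathbf H$ of $S$ and set $F_i=H_i(S)\otimes R$; then $\gr_{\mathbf F}(S\otimes R)\cong \gr_{\mathbf H}(S)\otimes R$ is a domain because $R$ is a firm domain (this is where $\Bbbk$ algebraically closed enters, via Lemma \ref{xxlem1.1}(1)). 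Intersecting $F_0=H_0(S)\otimes R$ over all such $\mathbf H$ gives $u_{good}(S\otimes R)\subseteq u_{good}(S)\otimes R=1\otimes R$, using that $S$ is $u_{good}$-free. Since $u_{good}$ of the center is an isomorphism invariant, $\phi$ carries $1\otimes R$ onto $1\otimes R'$; restricting $\phi$ yields $R\cong R'$ together with the distinguished central copies being matched.

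To recover $C$, I would reduce modulo a codimension-one ideal of the recovered central factor. As $R$ is an affine commutative domain over an algebraically closed field, it has a maximal ideal $\mathfrak m$ with $R/\mathfrak m=\Bbbk$; because $\mathfrak m$ is central, $C\otimes\mathfrak m$ is a two-sided ideal with $(C\otimes R)/(C\otimes\mathfrak m)\cong C$. Setting $1\otimes\mathfrak m'=\phi(1\otimes\mathfrak m)$ produces a codimension-one ideal $\mathfrak m'$ of $R'$ with $\phi(C\otimes\mathfrak m)=C'\otimes\mathfrak m'$, so $\phi$ descends to $C\cong(C\otimes R)/(C\otimes\mathfrak m)\cong(C'\otimes R')/(C'\otimes\mathfrak m')\cong C'$. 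Combined with $R\cong R'$, this shows $({\mathcal R}_3,{\mathcal C}_3)$ is a bicancellative pair.

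I expect the main obstacle to be the reverse containment $u_{good}(S\otimes R)\subseteq 1\otimes R$: one must guarantee that tensoring a good filtration of $S$ with the trivial filtration of $R$ again has domain associated graded, and this can fail for a general commutative domain $R$. The firm-domain property---available precisely because $\Bbbk$ is algebraically closed and $R$ is commutative---is exactly what keeps $\gr_{\mathbf H}(S)\otimes R$ a domain and so keeps the constructed filtration inside $\Phi_{good}(S\otimes R)$; without it the intersection defining $u_{good}(S\otimes R)$ could be strictly larger than $1\otimes R$ and the whole separation would break down.
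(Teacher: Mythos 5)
Your proposal is correct and takes essentially the same route as the paper: restrict $\phi$ to centers, identify $1\otimes R$ as $u_{good}$ of the center, conclude $R\cong R'$ since $u_{good}$ of the center is an isomorphism invariant, and then recover $C\cong C'$ by quotienting by a codimension-one central ideal. The only cosmetic difference is that you inline the needed containments (your restriction argument and your tensor-with-the-trivial-filtration construction, the latter being the special case where the firm filtration of $R$ is trivial) instead of citing Proposition \ref{xxpro2.3}(1), whose proof consists of exactly these two steps.
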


\subsection*{Characterization problem}
 In his 
Bourbaki seminar, Kraft \cite{Kr-1996} 
considered the following problem as one of 
the eight challenging problems in affine 
algebraic geometry (along with Jacobian 
Conjecture, Automorphism problem, the 
ZCP, etc).

\begin{ques}[Characterization Problem] 
\label{xxque0.10}
Find an algebraic-geometric characterization 
of $\Bbbk[z_1,\cdots,z_m]$.
\end{ques}

Let $V$ be a subspace of $A$. We say $V$ is 
{\sf $\Aut$-stable} if $\sigma(V)\subseteq V$ 
for all algebra automorphisms $\sigma\in 
\Aut_{alg}(A)$. Every algebra $A$ has three 
obvious $\Aut$-stable subspaces, namely, $A$, 
$0$, and $\kk$, which are called {\sf trivial} 
$\Aut$-stable subspaces of $A$.
The characterization and
cancellation problems are related through $u_P (A)$, which is Aut-stable, [Lemma \ref{xxlem3.1}]. The next result says 
that the polynomial rings and the Weyl algebras 
are special with respect to the $\Aut$-stable 
property.

\begin{thm}
\label{xxthm0.11}
Suppose ${\rm{char}}\; \Bbbk=0$. Let $A$ be 
either the polynomial ring $\Bbbk[z_1,\cdots,z_m]$ 
for $m\geq 2$ or the $n$th Weyl algebra 
${\mathbb A}_{n}$ for $n\geq 1$. Then $A$ has 
no nontrivial $\Aut$-stable subspaces.
\end{thm}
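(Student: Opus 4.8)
The plan is to prove the substantive statement that any $\Aut$-stable subspace $V$ containing an element outside $\Bbbk$ must equal $A$ (so that, together with the evident stability of $0$ and $\Bbbk\cdot 1$, these are the only $\Aut$-stable subspaces). Both $A=\Bbbk[z_1,\dots,z_m]$ and $A=\mathbb{A}_n$ carry a torus of scaling automorphisms, and the first move in either case is to use it to put $V$ in graded form. For the polynomial ring the maps $\delta_t\colon z_i\mapsto t z_i$ ($t\in\Bbbk^\times$) are automorphisms, and since $\delta_t$ scales the degree-$d$ part by $t^d$, a Vandermonde argument over the infinite field $\Bbbk$ shows that each homogeneous component of an element of $V$ again lies in $V$; hence $V=\bigoplus_d (V\cap A_d)$ is graded. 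For the Weyl algebra the correct torus is $\delta_t\colon x_i\mapsto t x_i,\ \partial_i\mapsto t^{-1}\partial_i$ (which preserves $[\partial_i,x_j]=\delta_{ij}$), and the same argument makes $V$ homogeneous for the weight grading assigning $x^a\partial^b$ the weight $|a|-|b|$.

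Next I would reduce to producing a single degree-one element. Starting from a non-scalar weight-homogeneous $v\in V$ of minimal (standard, resp. Bernstein) degree $d$, I apply translation automorphisms $T_c$ ($z_i\mapsto z_i+c_i$, resp. $x_i\mapsto x_i+c_i,\ \partial_i\mapsto\partial_i+c_i'$). Since $T_c(v)-v\in V$ has strictly smaller degree, and its top part is to first order a nonzero directional derivative of the symbol of $v$ (nonzero because $d\geq 1$, by Euler's identity), minimality forces $d=1$. A degree-one weight-homogeneous element is a nonzero linear form in the $x_i$ (weight $+1$) or the $\partial_i$ (weight $-1$); applying the linear automorphism group --- $\GL_m$ in the polynomial case, the symplectic group $\operatorname{Sp}_{2n}$ in the Weyl case, both transitive on nonzero vectors --- I move it to $z_1$, resp. $x_1$ (using the Fourier automorphism $x_1\leftrightarrow\partial_1$ if the surviving part has weight $-1$). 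Reading off the weight-$0$ component of a translate also yields $1\in V$.

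Now I would manufacture arbitrarily high pure powers and fill each graded piece. A tame degree-raising automorphism --- $z_2\mapsto z_2+z_1^d$ applied to $z_2\in V$ in the polynomial case (this is where $m\geq 2$ is essential), or $\partial_1\mapsto\partial_1+x_1^d$ applied to $\partial_1\in V$ in the Weyl case --- has a graded component equal to $z_1^d$, resp. $x_1^d$, so these pure powers lie in $V$ for every $d$. Applying the linear group to a pure power gives $\ell^d\in V$ for every linear form $\ell$, and since in characteristic zero the $d$-th powers of linear forms span $\operatorname{Sym}^d$, this fills the full degree-$d$ piece. For the polynomial ring this gives $A_d\subseteq V$ for all $d$, hence $V=A$. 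For the Weyl algebra it gives every possible degree-$d$ symbol, and I finish by induction on the Bernstein filtration: assuming $F_{d-1}(A)\subseteq V$, any $u\in F_d(A)$ differs from a suitable combination of the $\ell^d$ by an element of $F_{d-1}(A)\subseteq V$, so $F_d(A)\subseteq V$ and ultimately $V=A$.

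The main obstacle is precisely this last point in the Weyl case: unlike for the polynomial ring, the decisive degree-raising automorphisms $\partial_1\mapsto\partial_1+x_1^d$ do not preserve the Bernstein filtration, so one cannot simply pass to the associated graded $\gr A\cong\Bbbk[x_1,\dots,x_n,\xi_1,\dots,\xi_n]$ and quote the commutative case --- on $\gr A$ only the symplectic symmetries survive, and $\operatorname{Sym}^d(\Bbbk^{2n})$ is reducible as an $\operatorname{Sp}_{2n}$-module, so there is no irreducibility to exploit. The argument must therefore be run inside $A$ itself, tracking the lower-order corrections produced by $[\partial_i,x_j]=\delta_{ij}$ when symmetrized products of the generators are expanded; the filtration induction above is exactly the device that absorbs those corrections, and the crux is to verify that the leading symbols so obtained genuinely span $\operatorname{Sym}^d(\Bbbk^{2n})$, which follows from transitivity of $\operatorname{Sp}_{2n}$ on nonzero vectors together with the powers-of-linear-forms spanning fact in characteristic zero.
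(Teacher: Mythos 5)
Your proposal is correct, and it draws on the same basic toolbox as the paper's proofs of Theorems \ref{xxthm3.3}(1) and \ref{xxthm3.5}(1): translation automorphisms to lower degree, triangular automorphisms ($z_2\mapsto z_2+z_1^d$, resp.\ $y_1\mapsto y_1+h(x_1)$ in the paper's notation) to manufacture pure powers, and linear automorphisms plus Vandermonde-type arguments to spread out. But the organization differs in two genuine ways. First, you grade $V$ at the outset using the scaling torus (and, for the Weyl algebra, the weight grading $|a|-|b|$), which the paper never does; this lets you run the degree-lowering step on a minimal-degree homogeneous element via Euler's identity, where the paper instead performs a case-by-case induction on degree (Claim 1 in the proof of Theorem \ref{xxthm3.3}, and a rather compressed version of it in Claim 1 of Theorem \ref{xxthm3.5}). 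Second, and more substantially, your Weyl-algebra endgame is different: the paper's Claims 2--4 build every PBW monomial $x^{d_1}y^{t_1}\cdots$ one at a time, absorbing the corrections coming from $y_ix_i=x_iy_i-1$ through an explicit double induction with Vandermonde tricks at each stage, whereas you pass to symbols, use transitivity of $\operatorname{Sp}_{2n}$ on nonzero vectors together with the characteristic-zero fact that $d$-th powers of linear forms span $\operatorname{Sym}^d$, and absorb all noncommutative corrections in a single induction on the Bernstein filtration ($F_{d-1}(A)\subseteq V$ implies $F_d(A)\subseteq V$). Your route is slicker and isolates exactly where characteristic zero enters; you also correctly flag and resolve the one real subtlety, namely that the degree-raising automorphisms do not preserve the filtration, so the spanning statement must be checked for the $\operatorname{Sp}_{2n}$-orbit of $x_1^d$ rather than deduced from any irreducibility. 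The paper's version is more elementary and self-contained (it never invokes the polarization/spanning fact as a black box, only concrete Vandermonde computations), at the cost of heavier bookkeeping. Both arguments use $m\geq 2$ at the same spot --- without a second variable there is no degree-raising automorphism --- consistent with Theorem \ref{xxthm3.3}(2).
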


Motivated by the above result, we prove a 
characterization of the polynomial rings which 
partially answers Question \ref{xxque0.10}.

\begin{thm}[Theorem \ref{xxthm3.6}]
\label{xxthm0.12}
Suppose $\Bbbk$ is an algebraically closed field 
of characteristic zero. Let $A$ be an algebra 
{\rm{(}}which is not-necessarily-commutative{\rm{)}}. 
Then $A$ is ismorphic to $\Bbbk[z_1,\cdots,z_m]$ 
for some $m\geq 2$ if and only if the following 
two conditions hold:
\begin{enumerate}
\item[(1)]
$A\neq \Bbbk$ is finitely generated {\rm{(}}namely, affine{\rm{)}} and 
connected graded. 
\item[(2)]
$A$ does not have any nontrivial $\Aut$-stable 
subspaces. 
\end{enumerate}
\end{thm}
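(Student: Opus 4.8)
The plan is to dispatch the forward implication directly from Theorem~\ref{xxthm0.11} (a polynomial ring $\Bbbk[z_1,\dots,z_m]$ with $m\geq 2$ is $\neq\Bbbk$, finitely generated, and connected graded, giving~(1), and has no proper $\Aut$-stable subspaces by Theorem~\ref{xxthm0.11}, giving~(2)), and to concentrate on the reverse implication. There the strategy is to peel off the structural defects of $A$ one at a time, each defect being recorded by a canonical, hence $\Aut$-stable, subspace which condition~(2) then forces to vanish.

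First I would record the rigidity coming from the grading. Since $A$ is connected graded over the infinite field $\Bbbk$, for each $\lambda\in\Bbbk^{\times}$ the scaling map $\sigma_{\lambda}$ acting by $\lambda^{i}$ on $A_i$ is an algebra automorphism of $A$. Hence if $V\subseteq A$ is $\Aut$-stable then $\sigma_{\lambda}(V)\subseteq V$ for all $\lambda$, and a Vandermonde argument (using that $\Bbbk$ is infinite) shows each homogeneous component of an element of $V$ again lies in $V$; thus every $\Aut$-stable subspace is homogeneous. In particular any $\Aut$-stable subspace contained in the augmentation ideal $A_{\geq 1}$ meets $\Bbbk\cdot 1$ only in $0$, so to apply condition~(2) to such a subspace it suffices to check that it is nonzero and not all of $A$.

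Next I would produce three successive canonical subspaces. First, the commutator space $[A,A]:=\operatorname{span}\{ab-ba\}$ is $\Aut$-stable and, because $A_0=\Bbbk$ is central, lies in $A_{\geq 2}$; condition~(2) then forces $[A,A]=0$, so $A$ is commutative. Second, the nilradical $N$ of the now commutative graded ring $A$ is a characteristic, hence $\Aut$-stable, graded ideal contained in $A_{\geq 1}$; condition~(2) forces $N=0$, so $A$ is reduced. Third, writing $X=\Spec A$, the singular locus $Z=\operatorname{Sing}(X)$ is preserved by every algebra automorphism, so its defining ideal $I(Z)$ is an $\Aut$-stable (graded) ideal. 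Since $A$ is reduced and finitely generated over $\Bbbk$, the regular locus is open and dense, so $Z\subsetneq X$ and therefore $I(Z)\neq 0$; if $Z\neq\varnothing$ then $I(Z)$ is a proper $\Aut$-stable subspace, contradicting~(2). Hence $Z=\varnothing$ and $A$ is regular.

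Finally, a regular finitely generated connected graded commutative $\Bbbk$-algebra is a polynomial ring, so $A\cong\Bbbk[z_1,\dots,z_m]$ for some $m$. Condition~(1) rules out $m=0$, and $m=1$ is impossible because $\Bbbk\oplus\Bbbk z$ is a proper $\Aut$-stable subspace of $\Bbbk[z]$ (the affine substitutions $z\mapsto az+b$ preserve it); thus $m\geq 2$, completing the reverse direction. I expect the main obstacle to be the third step: one must argue carefully that the singular locus is genuinely invariant under \emph{all} algebra automorphisms and not merely the graded ones, that its defining ideal is simultaneously nonzero and proper, and then invoke the standard equivalence between graded regularity and being a polynomial algebra. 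The earlier reductions to commutativity and reducedness are precisely what make this geometric input legitimate.
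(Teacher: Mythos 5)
Your proposal is correct and follows essentially the same route as the paper's proof of Theorem \ref{xxthm3.6}: use condition (2) to kill, in turn, the commutator subspace (forcing commutativity), a nilpotent/radical ideal (forcing reducedness), and the defining ideal of the singular locus (forcing regularity), then invoke the fact that a regular connected graded commutative algebra is a polynomial ring, and finally exclude $m=1$ via the proper $\Aut$-stable subspaces $(\Bbbk+\Bbbk z_1)^d$ of $\Bbbk[z_1]$ from Theorem \ref{xxthm3.3}(2). The only cosmetic differences are that the paper works with the subalgebra generated by commutators (Example \ref{xxexa3.2}) and with the sum of the minimal primes (so it establishes that $A$ is a domain before the singular-locus step), whereas you use the commutator span and the nilradical, together with an explicit (but not strictly needed) homogeneity lemma coming from the scaling automorphisms.
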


Condition (1) in the above theorem can not be removed since the Weyl algebras satisfy condition (2). However,
we are wondering if condition (1) can be replaced by another weaker condition.

\section{Preliminaries}
\label{xxsec1}

In this section we collect several basic facts 
about firm domains and filtrations that will 
be used throughout the paper. As a consequence, 
we provide examples of $u_P$-maximal and 
$u_P$-minimal algebras. When $\Bbbk$ is an 
algebraically closed field of characteristic 
$p>0$, we introduce a class ${\mathcal R}$ 
of $\Bbbk$-algebras that includes all 
polynomial algebras, and identify certain 
${\mathcal R}$-cancellative algebras.
We start with the following lemma:

\begin{lemma}
\label{xxlem1.1}
\begin{enumerate}
\item[(1)]\cite[Lemma 1.1]{RS-2013}
Suppose $\Bbbk$ is algebraically closed.
Commutative domains are firm domains.
\item[(2)]
If $A$ is a firm domain and $B$ is a 
subalgebra of $A$, then $B$ is a firm domain.
\item[(3)]
Suppose $A$ is a firm domain. Then every Ore 
extension of $A$ is a firm domain.
\item[(4)]
The tensor product of two firm domains is a firm domain. 
\item[(5)]
Suppose $\Bbbk$ is algebraically closed.
Every connected graded domain of GK-dimension 
two is a firm domain.
\item[(6)]
Suppose $B$ is a subalgebra of $A$. If $A$ is 
a domain {\rm{(}}resp. firm domain{\rm{)}}, 
then $u_{good}(B)\subseteq u_{good}(A)$ 
{\rm{(}}resp. $u_{firm}(B)\subseteq u_{firm}(A)${\rm{)}}.
\end{enumerate}
\end{lemma}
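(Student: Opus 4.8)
The plan is to treat the six parts in an order that lets the structural parts reuse the formal ones: parts (2)--(4) are tensor-product bookkeeping, (6) reduces to (2), (1) is cited, and only (5) needs serious input, so I save it for last. For part~(1) I would follow \cite[Lemma~1.1]{RS-2013}: if $A$ is a commutative domain over algebraically closed $\kk$ with fraction field $K$, then $K/\kk$ is a regular extension, and tensoring a regular extension with any domain again yields a domain. Concretely, reducing to finitely generated subfields and choosing a separating transcendence basis, $K$ is finite separable over $F=\kk(x_1,\dots,x_n)$; here $F\ot B$ is the (central) Ore localization of $B[x_1,\dots,x_n]$ and hence a domain, and the separable minimal polynomial stays prime over $F\ot B$, so $K\ot B$, and with it the subalgebra $A\ot B$, is a domain.

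Parts (2)--(4) are formal. For (2), since $\kk$ is a field the inclusion $B\hookrightarrow A$ stays injective after $-\ot C$, so $B\ot C\hookrightarrow A\ot C$ exhibits $B\ot C$ as a subalgebra of the domain $A\ot C$; hence $B\ot C$ is a domain (and $B$, as a subalgebra of the domain $A$, is a domain), so $B$ is firm. For (3), the key identity is the algebra isomorphism $A[x;\sigma,\delta]\ot C\cong (A\ot C)[x;\sigma\ot\id,\delta\ot\id]$, where $\sigma\ot\id$ is an injective endomorphism and $\delta\ot\id$ the corresponding skew derivation of the domain $A\ot C$; an Ore extension of a domain by an injective endomorphism is a domain, so the right-hand side is a domain for every domain $C$. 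For (4), write $(A_1\ot A_2)\ot C\cong A_1\ot(A_2\ot C)$ and apply firmness of $A_2$ and then of $A_1$.

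Part~(6) is the only filtration-theoretic step, and it reduces to (2). Given any $\mathbf F\in\Phi_{good}(A)$, restrict it to $B$ by $G_i(B):=F_i(A)\cap B$; one verifies directly that $\mathbf G$ is an $\mathbb N$-filtration of $B$ and that the inclusions induce an injection $\gr_{\mathbf G}B\hookrightarrow \gr_{\mathbf F}A$ of graded algebras, using $F_{i-1}(A)\cap(F_i(A)\cap B)=F_{i-1}(A)\cap B$. Thus $\gr_{\mathbf G}B$ is a subalgebra of the domain $\gr_{\mathbf F}A$, so $\mathbf G\in\Phi_{good}(B)$; any $b\in u_{good}(B)$ therefore lies in $G_0(B)=F_0(A)\cap B\subseteq F_0(A)$, and since $\mathbf F$ was arbitrary, $b\in u_{good}(A)$. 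The firm version is identical, invoking part~(2) to see that the subalgebra $\gr_{\mathbf G}B$ of the firm domain $\gr_{\mathbf F}A$ is again a firm domain.

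The main obstacle is part~(5). First I would reduce to the finitely generated case: writing $A=\bigcup_\alpha A_\alpha$ as the directed union of its finitely generated graded subalgebras (each a connected graded domain of GK-dimension $\leq 2$), one has $A\ot C=\varinjlim(A_\alpha\ot C)$ with injective transition maps, so it suffices to show each $A_\alpha$ is firm. For $\GKdim=0$ the subalgebra is $\kk$; for $\GKdim=1$ the Small--Warfield theorem makes $A_\alpha$ a finite module over an affine central domain whose fraction field $F$ has transcendence degree one over the algebraically closed $\kk$, so by Tsen's theorem $\mathrm{Br}(F)=0$ and $A_\alpha$, being a domain with central simple Goldie quotient, must be commutative, hence firm by part~(1). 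For $\GKdim=2$ I would invoke the Artin--Stafford classification: the graded quotient ring of $A_\alpha$ has the form $D[t,t^{-1};\sigma]$ with $D$ a division ring finite over its center, the center being the function field $F$ of a curve; Tsen's theorem again forces $D=F$ to be a field, so $A_\alpha$ embeds in the skew Laurent ring $F[t,t^{-1};\sigma]$. This ambient ring is firm: $F$ is firm by part~(1), and by the identity from part~(3), $F[t,t^{-1};\sigma]\ot C\cong (F\ot C)[t,t^{-1};\sigma\ot\id]$ is a skew Laurent ring over the domain $F\ot C$, hence a domain; so $A_\alpha$ is firm by part~(2). The delicate points, where I expect the real effort, are the correct invocation of Artin--Stafford and the verification that the center of $D$ is a one-dimensional function field so that Tsen applies.
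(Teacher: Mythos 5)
Your proof is correct, and on parts (1)--(4) and (6) it coincides with the paper's own treatment: the paper cites \cite[Lemma 1.1]{RS-2013} for (1), declares (2)--(4) ``clear'' (your tensor identities are exactly the intended justifications), and proves (6) by precisely your restriction-of-filtrations argument. The one substantive divergence is part (5). The paper applies \cite[Theorem 0.1]{AS-1995} directly to $A$: the graded fraction ring of $A$ is $K[t^{\pm 1};\sigma]$ with $K$ a field of GK-dimension one, hence firm by (1); then the skew Laurent ring is firm by (3) and $A$ is firm by (2). Because $\GKdim A=2$ on the nose, no case analysis is needed. You instead first pass to the directed union of finitely generated graded subalgebras, which makes the finite-generation side of Artin--Stafford available but forces you to also dispose of the GK-dimension $0$ and $1$ pieces, for which you import Small--Warfield and Tsen; you also re-derive commutativity of the degree-zero division ring via Tsen, whereas the paper reads it off from the statement of \cite[Theorem 0.1]{AS-1995}, where $K$ is already a field. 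Your endgame (embed into $F[t^{\pm 1};\sigma]$, then apply (1), the tensor identity behind (3), and (2)) is the paper's endgame, and you are in fact more careful on one point: a skew Laurent ring is not literally an Ore extension, and you justify its firmness directly via $F[t^{\pm 1};\sigma]\otimes C\cong (F\otimes C)[t^{\pm 1};\sigma\otimes \mathrm{id}]$, while the paper silently subsumes this under (3). A caveat common to both arguments: \cite[Theorem 0.1]{AS-1995} is stated for domains generated in degree one, a hypothesis neither you nor the paper verifies or reduces to; your finitely generated reduction narrows that gap but does not close it.
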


\begin{proof}
(2,3,4) Clear.

(5) Let $Q$ be the graded quotient ring of $A$. By 
\cite[Theorem 0.1]{AS-1995}, $Q$ is of the form 
$K[t^{\pm 1};\sigma]$ where $K$ is a field of 
GK-dimension one. By part (1), $K$ is a firm domain. 
By part (3), $Q$ is a firm domain. Since $A$ is a 
subalgebra of $Q$, $A$ is a firm domain by part (2). 

(6) This is a consequence of the fact that every 
good {\rm{(}}resp. firm{\rm{)}} filtration on 
$A$ restricts to a good {\rm{(}}resp. 
firm{\rm{)}}  filtration on $B$.
\end{proof}

Let ${\mathbf F}$ be a filtration of $A$ and 
${\mathbf G}$ be a filtration of $B$. We 
define a filtration, called ${\mathbf H}$, 
of the tensor product $A\otimes B$ as follows:
\begin{equation}
\label{E1.1.1}\tag{E1.1.1}
H_i(A\otimes B):=\sum_{j=0}^{i} F_{j}(A)
\otimes G_{i-j}(B) \subseteq A\otimes B.
\end{equation}
The following lemma is well-known.

\begin{lemma}
\label{xxlem1.2}
Let $A$ and $B$ be two algebras. Let 
${\mathbf F}$ {\rm{(}}resp. 
${\mathbf G}${\rm{)}} be a filtration of 
$A$ {\rm{(}}resp. $B${\rm{)}}.
\begin{enumerate}
\item[(1)]
${\mathbf H}$ defined in \eqref{E1.1.1} is 
a filtration of $A\otimes B$.
\item[(2)]
$\gr_{\mathbf H} (A\otimes B)
\cong \gr_{\mathbf F} A\otimes \gr_{\mathbf G} B$.
\item[(3)]
If ${\mathbf F}$ is a good filtration and ${\mathbf G}$ is a 
firm filtration, then ${\mathbf H}$ is a good filtration.
\item[(4)]
If both ${\mathbf F}$ and ${\mathbf G}$ are firm filtrations, then 
so is ${\mathbf H}$.
\end{enumerate}
\end{lemma}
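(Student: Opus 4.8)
The plan is to handle the four parts in order, with essentially all of the content concentrated in part (2); parts (3) and (4) will then be immediate formal consequences of part (2) together with the definition of a firm domain and Lemma \ref{xxlem1.1}(4). Throughout I read the displayed $\oplus$ in \eqref{E1.1.1} as an \emph{internal} sum of subspaces of $A\otimes B$, i.e.\ $H_i(A\otimes B)=\sum_{j=0}^{i} F_j(A)\otimes G_{i-j}(B)$, being careful that these summands overlap and do not form a genuine direct sum inside $A\otimes B$.

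For part (1), I would verify the three filtration axioms directly. The containment $\Bbbk\subseteq H_i(A\otimes B)\subseteq H_{i+1}(A\otimes B)$ follows from $\Bbbk\subseteq F_0(A)\otimes G_0(B)$ together with the inclusions $G_{i-j}(B)\subseteq G_{i+1-j}(B)$; exhaustivity $A\otimes B=\bigcup_i H_i(A\otimes B)$ holds because any $a\otimes b$ with $a\in F_m(A)$ and $b\in G_n(B)$ lies in $H_{m+n}(A\otimes B)$; and multiplicativity follows from the computation $(F_j(A)\otimes G_{i-j}(B))(F_{j'}(A)\otimes G_{i'-j'}(B))\subseteq F_{j+j'}(A)\otimes G_{(i+i')-(j+j')}(B)\subseteq H_{i+i'}(A\otimes B)$, using the multiplicativity of $\mathbf F$ and $\mathbf G$ and the inequality $j+j'\leq i+i'$.

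Part (2) is the crux, and the main obstacle is precisely that the summands $F_j(A)\otimes G_{i-j}(B)$ are not disjoint, so the desired isomorphism cannot be read off termwise—one must show the overlaps vanish in the associated graded. I would first set up the natural graded map $\phi\colon \gr_{\mathbf F}A\otimes\gr_{\mathbf G}B\to\gr_{\mathbf H}(A\otimes B)$ which, for $a\in F_j(A)$ and $b\in G_k(B)$, sends the degree-$(j{+}k)$ element $\overline a\otimes\overline b$ to the class of $a\otimes b$ in $H_{j+k}(A\otimes B)/H_{j+k-1}(A\otimes B)$. Well-definedness is the key check: replacing $a$ by $a+c$ with $c\in F_{j-1}(A)$ changes $a\otimes b$ by $c\otimes b\in F_{j-1}(A)\otimes G_k(B)\subseteq H_{j+k-1}(A\otimes B)$, so the class is unchanged, and symmetrically in $b$; bilinearity and multiplicativity of $\phi$ are then routine. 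To prove $\phi$ is bijective I would pass to adapted bases: choose a $\Bbbk$-basis $\{e_\alpha\}$ of $A$ refining $\mathbf F$ (so each $F_i(A)$ is spanned by the $e_\alpha$ of degree $d(\alpha)\leq i$) and similarly a basis $\{f_\beta\}$ of $B$ refining $\mathbf G$. Then $\{e_\alpha\otimes f_\beta\}$ is a basis of $A\otimes B$, and one checks $H_i(A\otimes B)=\operatorname{span}\{e_\alpha\otimes f_\beta : d(\alpha)+d(\beta)\leq i\}$, so $H_i(A\otimes B)/H_{i-1}(A\otimes B)$ has as a basis the classes of the $e_\alpha\otimes f_\beta$ with $d(\alpha)+d(\beta)=i$. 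This matches exactly the degree-$i$ component of $\gr_{\mathbf F}A\otimes\gr_{\mathbf G}B$ under $\phi$, so $\phi$ carries a basis to a basis and is therefore an isomorphism of graded algebras.

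With part (2) established, parts (3) and (4) are formal. For (3), $\gr_{\mathbf F}A$ is a domain and $\gr_{\mathbf G}B$ is a firm domain, so by the definition of firm domain (and the symmetry of $\otimes$) the product $\gr_{\mathbf F}A\otimes\gr_{\mathbf G}B\cong\gr_{\mathbf H}(A\otimes B)$ is a domain, i.e.\ $\mathbf H$ is a good filtration. For (4), both $\gr_{\mathbf F}A$ and $\gr_{\mathbf G}B$ are firm domains, so $\gr_{\mathbf H}(A\otimes B)$ is a firm domain by Lemma \ref{xxlem1.1}(4), i.e.\ $\mathbf H$ is a firm filtration. The only delicate point in the whole argument is the adapted-basis description of $H_i(A\otimes B)$ in part (2), so I would take care to justify that identification rather than treating the $\oplus$ in \eqref{E1.1.1} as an honest internal direct sum.
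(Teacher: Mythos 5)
Your proof is correct. The paper offers no argument of its own here---it introduces $\mathbf H$ in \eqref{E1.1.1} and simply declares Lemma \ref{xxlem1.2} to be well-known---so there is nothing to compare against; your write-up supplies exactly the standard argument one would expect: direct verification of the filtration axioms for (1), the natural graded map made bijective via bases adapted to $\mathbf F$ and $\mathbf G$ for (2), and then (3) and (4) as formal consequences of (2) together with Definition \ref{xxdef0.2} and Lemma \ref{xxlem1.1}(4). Your care in reading the $\oplus$ of \eqref{E1.1.1} as an internal (non-direct) sum, and in proving the adapted-basis description of $H_i(A\otimes B)$ rather than pretending the summands are independent, addresses the one genuinely delicate point in the statement.
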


For the rest of this paper, $P$ is the property of being a domain
plus something else to be specified.
This means that we only consider good filtrations 
of $A$. If ${\mathbf F}$ is a good filtration of 
$A$, then for all nonzero elements $x,y\in A$,
\begin{equation}
\label{E1.2.1}\tag{E1.2.1}
\gr_{\mathbf F}(xy)=\gr_{\mathbf F}(x)
\gr_{\mathbf F}(y),
\end{equation}
where $\gr_{\mathbf F}(x)$ denotes the quotient
class of $x$ in the quotient space 
$F_{i}(A)/F_{i-1}(A)
(=\gr_{\mathbf F}(A)_i)$ if $x\in F_i(A)
\setminus F_{i-1}(A)$. If ${\mathbf F}$ is 
understood, then we use $\gr$ instead of 
$\gr_{\mathbf F}$.

We say an algebra $A$ is (left) {\sf integral 
over} a subalgebra $D\subseteq A$ if for 
every element $f\in A$, there are elements 
$d_s\in D$, not all zero, such that 
$\sum_{s=0}^{n} d_s f^s=0$. We implicitly 
assume that $\Phi_{P}(A)\neq \emptyset$, and 
a filtration always means a filtration in 
$\Phi_{P}(A)$.

\begin{proposition}  
\label{xxpro1.3}  
Let $A$ be a domain that is integral over 
$u_P(A)$. Then $A$ is $u_P$-maximal.  
\end{proposition}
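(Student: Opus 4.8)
The plan is to show that every filtration $\mathbf{F}\in\Phi_P(A)$ is necessarily trivial, i.e.\ $F_0(A)=A$. Since $u_P(A)=\bigcap_{\mathbf F\in\Phi_P(A)}F_0(A)$ and $\Phi_P(A)\neq\emptyset$, this at once yields $u_P(A)=A$, which is precisely the assertion that $A$ is $u_P$-trivial. So I fix an arbitrary filtration $\mathbf F\in\Phi_P(A)$ and set $D:=u_P(A)$. By the definition of $u_P$ we have $D\subseteq F_0(A)$, so every nonzero element of $D$ has filtration degree exactly $0$.

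First I would record the numerical properties of the degree function $\deg(x):=\min\{i: x\in F_i(A)\}$, defined on nonzero elements. Because $\mathbf F\in\Phi_P(A)$ is a good filtration, $\gr_{\mathbf F}A$ is a domain, and \eqref{E1.2.1} gives $\gr(xy)=\gr(x)\gr(y)$; hence $\deg$ is additive, $\deg(xy)=\deg(x)+\deg(y)$ for nonzero $x,y$. The one auxiliary fact I need is a ``no cancellation at the top'' observation: if $v_1,\dots,v_k$ are nonzero elements of $A$ whose degrees are pairwise distinct, then $v_1+\cdots+v_k\neq 0$, since in $\gr_{\mathbf F}A$ the homogeneous component of maximal degree receives a nonzero contribution from exactly one $v_i$ and none from the others.

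Next I would argue by contradiction. Let $f\in A$; the case $f=0$ is immediate, so assume $f\neq 0$ and suppose $d:=\deg(f)>0$. The hypothesis that $A$ is integral over $D$ supplies a relation $\sum_{s=0}^{n} d_s f^s=0$ with $d_s\in D$ not all zero. For each index $s$ with $d_s\neq 0$, additivity of $\deg$ together with $\deg(d_s)=0$ gives $\deg(d_s f^s)=\deg(d_s)+s\,\deg(f)=sd$. Since $d>0$, these degrees are pairwise distinct as $s$ ranges over $\{s: d_s\neq 0\}$, so the auxiliary observation forces $\sum_s d_s f^s\neq 0$, contradicting the integral relation. (The degenerate possibility that $d_0$ is the only nonzero coefficient is excluded, as the relation would then read $d_0=0$.) Hence $\deg(f)=0$, i.e.\ $f\in F_0(A)$; as $f$ was arbitrary, $F_0(A)=A$, and the conclusion follows.

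The step I expect to carry the real content is the additivity of $\deg$, which is exactly where the good-filtration hypothesis (that $\gr_{\mathbf F}A$ is a domain) enters: without it, lower-degree terms could conspire to cancel the putative leading term $d_m f^m$ and the argument would collapse. Everything else is routine bookkeeping about filtration degrees and the definition of $u_P$.
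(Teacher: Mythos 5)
Your proof is correct and is essentially the paper's argument: both fix a filtration with $f\notin F_0(A)$, use the integral relation $\sum_s d_s f^s=0$ over $u_P(A)\subseteq F_0(A)$, and derive a contradiction from the fact that (by \eqref{E1.2.1}, i.e.\ degree additivity when $\gr_{\mathbf F}A$ is a domain) the nonzero terms $d_s f^s$ have distinct filtration degrees and hence cannot sum to zero. The only cosmetic difference is that you phrase the conclusion as ``every filtration in $\Phi_P(A)$ is trivial'' while the paper phrases it as ``no $f\in A\setminus u_P(A)$ exists,'' which are equivalent by definition.
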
  

\begin{proof}  
We claim that $A = u_P(A)$. Suppose to the 
contradiction that there exists an element 
$f \in A \setminus u_P(A)$. Then there 
exists a filtration ${\mathbf F}$ such that 
$f \not\in F_0(A)$. Consequently, the 
sequence $\{f^n\}_{n \geq 1}$ consists of 
elements of different degrees, by \eqref{E1.2.1}.  

Since $A$ is integral over $u_{P}(A)$,   
$\sum_{i = 0}^{n} d_i f^i = 0$, 
where $d_n \neq 0$ and all $d_i$ are in $u_P(A)$.  
Without loss of generality, we may assume that 
$f\in F_m\setminus F_{m-1}$ for some $m\geq 1$. 
Then $d_nf^n\in F_{nm}\setminus F_{mn-1}$ by 
\eqref{E1.2.1}. Similarly, we have 
$d_if^i\in F_{nm-1}$ for all $i<n$. This 
implies that $d_nf^n=-\sum_{i=0}^{n-1}d_if^i
\in F_{mn-1}$, giving a contradiction.
\end{proof}  

\begin{Cor}
\label{xxcor1.4}
Let $A$ be a domain, and let $D$ be the 
subalgebra of $A$ generated by all 
invertible elements in $A$. If $A$ is 
integral over $D$, then $A$ is $u_P$-maximal. 
As a consequence, every division algebra 
is $u_P$-maximal.  
\end{Cor}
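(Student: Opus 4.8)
The plan is to reduce the statement to Proposition~\ref{xxpro1.3}. The key observation is that the subalgebra $D$ is automatically contained in $u_P(A)$; once I have $D\subseteq u_P(A)$, the integral equation witnessing that $A$ is integral over $D$ has all of its coefficients in $D\subseteq u_P(A)$, so the very same equation witnesses that $A$ is integral over $u_P(A)$. Proposition~\ref{xxpro1.3} then applies directly and yields that $A$ is $u_P$-trivial. Thus the whole corollary hinges on the single containment $D\subseteq u_P(A)$.

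Since $u_P(A)$ is a subalgebra of $A$ and $D$ is by definition the subalgebra generated by the invertible elements of $A$, to establish $D\subseteq u_P(A)$ it suffices to show that every invertible element of $A$ lies in $u_P(A)=\bigcap_{{\mathbf F}\in\Phi_P(A)}F_0(A)$. I would fix an arbitrary filtration ${\mathbf F}\in\Phi_P(A)$ and an invertible $u\in A$ with inverse $u^{-1}$. Because ${\mathbf F}$ is a good filtration, $\gr A$ is a domain and $\gr$ is multiplicative, i.e. \eqref{E1.2.1} holds. Writing $u\in F_m(A)\setminus F_{m-1}(A)$ and $u^{-1}\in F_k(A)\setminus F_{k-1}(A)$, the elements $\gr(u)$ and $\gr(u^{-1})$ are nonzero homogeneous of degrees $m$ and $k$, and their product $\gr(u)\gr(u^{-1})=\gr(uu^{-1})=\gr(1)$ is therefore a nonzero homogeneous element of degree $m+k$. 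But $\gr(1)$ has degree $0$, and since ${\mathbf F}$ is an $\mathbb N$-filtration we have $m,k\geq 0$; hence $m+k=0$ forces $m=0$, that is, $u\in F_0(A)$. As ${\mathbf F}$ was arbitrary, $u\in u_P(A)$, as desired.

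For the final assertion, I would note that in a division algebra every nonzero element is invertible, so the subalgebra $D$ generated by the invertible elements is all of $A$; then $A$ is (trivially) integral over $D=A$, and the main part of the corollary applies to give that $A$ is $u_P$-trivial.

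I do not expect a genuine obstacle here, as the argument is short. The only point requiring care is the degree bookkeeping in the second paragraph: one must use that the filtration is $\mathbb N$-indexed (so that $m,k\geq 0$, whence $m+k=0$ forces $m=0$) together with the fact that ${\mathbf F}$ is a good filtration (so that $\gr A$ is a domain and \eqref{E1.2.1} holds), in order to guarantee that the leading term $\gr(u)\gr(u^{-1})$ is nonzero and genuinely of degree $m+k$.
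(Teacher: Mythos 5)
Your proposal is correct and follows essentially the same route as the paper: both proofs establish $D\subseteq u_P(A)$ by using the multiplicativity of $\gr$ from \eqref{E1.2.1} to show that an invertible element of positive degree would contradict $\gr(x)\gr(x^{-1})=\gr(1)$ in the domain $\gr A$, and then invoke Proposition~\ref{xxpro1.3}. The only cosmetic difference is that the paper argues by contradiction while you do the degree bookkeeping directly (and you spell out the division-algebra consequence, which the paper leaves implicit); these are the same argument.
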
  

\begin{proof}  
We cliam that $D \subseteq u_P(A)$. Suppose to 
the contrary that $D\not\subseteq u_{P}(A)$.
Then there is an invertible element 
$x \in A$ such that $x \not\in F_0(A)$ for 
some good filtration ${\mathbf F}$. Then 
$\gr(x)$ has 
positive degree, while $\gr(x^{-1})$ has 
non-negative degree. However, by \eqref{E1.2.1}, 
we have  
\[
1 = \gr(1) = \gr(x) \gr(x^{-1}),
\]  
leading to a contradiction.  

Since $A$ is integral over $D$, it is also 
integral over $u_P(A)$. By Proposition 
\ref{xxpro1.3}, $A$ is $u_{P}$-maximal.
The consequence is clear.
\end{proof}  
 
\begin{lemma}
\label{xxlem1.5} 
Let ${\mathbf F}$ be a good filtration of 
an algebra $Y$. Let $X$ be a subalgebra of 
$F_0(Y)$. If $Z$ is a subalgebra of $Y$ 
containing $X$ such that $\GKdim Z<\GKdim 
X+1<\infty$, then $Z\subseteq F_0(Y)$. 
\end{lemma}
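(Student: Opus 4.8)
The plan is to argue by contradiction. Suppose $Z \not\subseteq F_0(Y)$, and pick $z \in Z$ lying in $F_m(Y) \setminus F_{m-1}(Y)$ for some $m \geq 1$. Because $\mathbf{F}$ is good, $\gr Y$ is a domain and $\gr$ is multiplicative by \eqref{E1.2.1}; hence, writing $\bar z := \gr(z) \in \gr_m Y$, every power $\bar z^{\,j}$ is a nonzero homogeneous element of degree $jm$. The guiding intuition is that adjoining such a positive-degree element $z$ to $X$ behaves like adjoining a free polynomial variable, so it should raise the GK-dimension by exactly $1$, contradicting $\GKdim Z < \GKdim X + 1$.

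To make this precise, I would fix a finite-dimensional subspace $V \subseteq X$ with $1 \in V$ and set $U := V + \kk z \subseteq Z$. Since $X \subseteq F_0(Y)$, the filtration axioms give $V^n \subseteq F_0(Y)$, and for any $v \in V^n$ we have $\gr(z^{\,j} v) = \bar z^{\,j} v$ by \eqref{E1.2.1}, which is nonzero whenever $v \neq 0$ because $\gr Y$ is a domain. The key computation is then the dimension estimate
\[
\dim U^{2n} \;\geq\; \dim\Bigl(\textstyle\sum_{j=0}^{n} z^{\,j} V^n\Bigr) \;\geq\; (n+1)\,\dim V^n .
\]
The first inequality holds because $z^{\,j} V^n \subseteq U^{\,j+n} \subseteq U^{2n}$ for $0 \le j \le n$. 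For the second, I would pass to the associated graded: the subspace $\sum_j z^{\,j} V^n$ inherits the filtration of $Y$, so its dimension equals that of its associated graded, which embeds into $\gr Y$ and contains each homogeneous piece $\bar z^{\,j} V^n \subseteq \gr_{jm} Y$. Multiplication by $\bar z^{\,j}$ is injective on $V^n$ (again since $\gr Y$ is a domain), so $\dim(\bar z^{\,j} V^n) = \dim V^n$; and as the degrees $jm$ are distinct for $j = 0, 1, \dots, n$, these pieces sum directly, producing the factor $(n+1)$.

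Finally I would feed this into the GK-dimension. Since $U$ is a finite-dimensional subspace of $Z$, one has $\GKdim Z \geq \limsup_n \log_n \dim U^n$, and the estimate $\dim U^{2n} \geq (n+1)\dim V^n$ yields $\GKdim Z \geq 1 + \limsup_n \log_n \dim V^n$; taking the supremum over all finite-dimensional $V \subseteq X$ then gives $\GKdim Z \geq 1 + \GKdim X$, contradicting the hypothesis $\GKdim Z < \GKdim X + 1$. The main obstacle is the middle step: one must justify carefully, via the associated graded and the domain property of $\gr Y$, that the spaces $z^{\,j}V^n$ contribute independently to $U^{2n}$, and then handle the $\limsup$ in the definition of GK-dimension correctly — working along a subsequence of $n$ that realizes $\limsup_n \log_n \dim V^n$, and observing that the supremum over $V$ recovers $\GKdim X$ exactly rather than only bounding it.
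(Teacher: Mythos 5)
Your proposal is correct and follows essentially the same route as the paper's own proof: choose $z\in Z\setminus F_0(Y)$, use the domain property of $\gr Y$ and the multiplicativity of the symbol map \eqref{E1.2.1} to see that the powers of $z$ have distinct positive degrees, so that the spaces $z^jX$ contribute independently, forcing $\GKdim Z\geq \GKdim X+1$, a contradiction. The only difference is that you carefully spell out the dimension count $\dim U^{2n}\geq (n+1)\dim V^n$ and the limsup bookkeeping that the paper compresses into the single sentence ``$\sum_{n\geq 0}z^nX$ is a direct sum, which implies $\GKdim Z\geq \GKdim X+1$.''
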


\begin{proof} Suppose that $Z$ is not a 
subset of $F_0(Y)$. Then there is an 
element $z\in Z$ not in $F_0(Y)$. Then 
$z^n$ has different degrees for different 
$n$. Therefore, $\sum_{n\geq 0} z^n X$ is 
a direct sum. This implies that 
$\GKdim Z\geq \GKdim X+1$, yielding a 
contradiction. 
\end{proof}

\begin{proposition}
\label{xxpro1.6}
Suppose $\Bbbk$ is algebraically closed.
Let $A$ be a domain of GK-dimension one. 
Then one of the following holds:
\begin{enumerate}
\item[(1)]
$u_{P}(A)=A$.
Namely, $A$ is $u_P$-maximal. 
\item[(2)]
$u_{P}(A)=\Bbbk$. Namely, $A$ is $u_P$-minimal.
\end{enumerate}
\end{proposition}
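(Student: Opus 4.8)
The plan is to establish the dichotomy by contraposition: I will show that if $u_{P}(A)\neq \Bbbk$, then necessarily $u_{P}(A)=A$. Since $u_{P}(A)$ is always a subalgebra of $A$ containing $\Bbbk$ (as observed immediately after Definition \ref{xxdef0.3}), these are the only two possibilities, and so this contrapositive is exactly equivalent to the asserted dichotomy between case (1) and case (2).

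So suppose $u_{P}(A)\neq \Bbbk$ and choose an element $x\in u_{P}(A)\setminus \Bbbk$. First I would record that $x$ is transcendental over $\Bbbk$: because $A$ is a domain and $\Bbbk$ is algebraically closed, any nonzero polynomial relation $p(x)=0$ with $p\in\Bbbk[t]$ could be factored as $c\prod_{i}(x-a_{i})=0$ with $a_{i}\in\Bbbk$, and the domain hypothesis would force $x=a_{i}\in\Bbbk$, contradicting the choice of $x$. Hence $X:=\Bbbk[x]\subseteq u_{P}(A)$ is a commutative polynomial algebra with $\GKdim X=1$.

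Next, I would fix an arbitrary good filtration ${\mathbf F}\in \Phi_{P}(A)$. By the definition of $u_{P}(A)$ as the intersection $\bigcap_{{\mathbf F}\in\Phi_{P}(A)}F_{0}(A)$, we have $X\subseteq u_{P}(A)\subseteq F_{0}(A)$. Now I would apply Lemma \ref{xxlem1.5} with $Y=A$, the subalgebra $X=\Bbbk[x]\subseteq F_{0}(A)$, and $Z=A$. The required hypothesis $\GKdim Z<\GKdim X+1<\infty$ here reads $1=\GKdim A<\GKdim X+1=2<\infty$, which holds precisely because $\GKdim A=1$. The conclusion of Lemma \ref{xxlem1.5} is then $A=Z\subseteq F_{0}(A)$, so $F_{0}(A)=A$ and ${\mathbf F}$ is the trivial filtration.

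Since ${\mathbf F}\in\Phi_{P}(A)$ was arbitrary, every filtration in $\Phi_{P}(A)$ is trivial, and therefore $u_{P}(A)=\bigcap_{{\mathbf F}}F_{0}(A)=A$, completing the contrapositive. The only genuinely load-bearing step is the GK-dimension comparison feeding into Lemma \ref{xxlem1.5}: it is the hypothesis $\GKdim A=1$ that makes the strict inequality $\GKdim A<\GKdim \Bbbk[x]+1$ available, and both the one-dimensionality of $A$ and the algebraic closedness of $\Bbbk$ (which guarantees $\GKdim\Bbbk[x]=1$ rather than $0$, i.e.\ that a nonscalar element is transcendental) are what make the argument go through. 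I expect no other obstacle, as the remaining assertions are immediate from the definitions.
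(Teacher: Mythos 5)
Your proof is correct and follows essentially the same route as the paper: both arguments feed the GK-dimension comparison $\GKdim A = 1 < \GKdim X + 1$ into Lemma \ref{xxlem1.5} to force every (hence any nontrivial) filtration to be trivial, the only cosmetic differences being your contrapositive framing versus the paper's contradiction, and your use of $X=\Bbbk[x]$ (with the transcendence of a nonscalar element spelled out) where the paper takes $X=u_P(A)$ directly.
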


Proposition \ref{xxpro1.6} indicates that 
generically a domain of GK-dimension one is 
$u_P$-maximal.

\begin{proof}[Proof of Proposition \ref{xxpro1.6}]
Suppose $X:=u_{P}(A)\neq A$. It remains to show 
that $X=\Bbbk$. If not, $\GKdim X=1$. Let 
${\mathbf F}$ be  any non-trivial filtration 
of $A$. Since $\GKdim A=\GKdim X$ and 
$X\subseteq F_0(A)$, by Lemma \ref{xxlem1.5}, 
$A\subseteq F_0(A)$, yielding a contradiction.
\end{proof}

\begin{proposition}
\label{xxpro1.7} 
Suppose $\Bbbk$ is algebraically closed. Let 
$A$ be a domain of GK-dimension two. Then one 
of the following holds:
\begin{enumerate}
\item[(1)]
$A$ is $u_P$-maximal.
\item[(2)]
$A$ is $u_P$-minimal.
\item[(3)]
$u_{P}(A)$ is a commutative domain of 
GK-dimension one. Further, for every 
nontrivial filtration ${\mathbf F}$ of $A$, 
$F_0(A)=u_{P}(A)$.
\end{enumerate}
\end{proposition}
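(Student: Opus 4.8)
The plan is to follow the trichotomy pattern already established in Proposition \ref{xxpro1.6}, but now the middle ground (a nontrivial intermediate value of $u_P(A)$) must be analyzed rather than ruled out. Set $X := u_P(A)$. Since $X$ is a subalgebra of $A$ and $A$ is a domain of $\GKdim$ two, we have $\GKdim X \in \{0,1,2\}$, and I would split the argument according to this value. If $\GKdim X = 2$, then for any nontrivial filtration ${\mathbf F}$ we have $X \subseteq F_0(A)$ with $\GKdim X = \GKdim A$, and a direct application of Lemma \ref{xxlem1.5} (taking $Y = A$, $Z = A$) forces $A \subseteq F_0(A)$, contradicting nontriviality; hence the only filtration is trivial and we are in case (1). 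If $\GKdim X = 0$, then since $A$ is a domain and $\kk$ is algebraically closed, $X$ is a finite-dimensional domain over an algebraically closed field, so $X = \kk$ and we are in case (2).

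The substantive case is $\GKdim X = 1$, which I expect to produce case (3). Here I would argue that for \emph{every} nontrivial filtration ${\mathbf F}$, the degree-zero part $F_0(A)$ collapses exactly onto $X$. One inclusion, $X \subseteq F_0(A)$, holds by the definition of $u_P$ as the intersection of all $F_0$'s. For the reverse inclusion $F_0(A) \subseteq X$: set $Z := F_0(A)$, a subalgebra containing $X$, and I would show $\GKdim F_0(A) = 1$. Indeed $F_0(A) \subseteq A$ gives $\GKdim F_0(A) \le 2$; if $\GKdim F_0(A) = 2$ I want to derive that $F_0(A) = A$ (contradicting nontriviality). This is where Lemma \ref{xxlem1.5} does the work in the opposite direction: applying it with $X$ replaced by a suitable $\GKdim$-one subalgebra of $F_0(A)$ and $Z = A$ should force $A \subseteq F_0(A)$ whenever $F_0(A)$ is ``large enough,'' so $F_0(A)$ must have $\GKdim$ exactly one. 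Then $F_0(A)$ is itself a $\GKdim$-one domain containing $X$, and I would invoke the already-proved GK-dimension-one dichotomy (Proposition \ref{xxpro1.6}) applied internally, together with the restriction principle from Lemma \ref{xxlem1.1}(6), to conclude $F_0(A) = X$. Taking the intersection over all nontrivial ${\mathbf F}$ then recovers $u_P(A) = X$ consistently and yields $F_0(A) = u_P(A)$ for each such filtration.

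It remains to check the two structural claims in (3): that $X = u_P(A)$ is a \emph{commutative} domain of $\GKdim$ one. That it is a domain is immediate since it is a subalgebra of the domain $A$. For commutativity, a $\GKdim$-one domain over an algebraically closed field need not be commutative a priori, so I would need to exploit that $X = F_0(A)$ for a nontrivial good filtration: the associated graded $\gr_{\mathbf F} A$ is a domain with a nonzero positive-degree part, and $X$ sits as its degree-zero component. Here I would use the Artin--Stafford-type classification already cited in the proof of Lemma \ref{xxlem1.1}(5) — the graded fraction ring of a $\GKdim$-two connected-graded domain has the form $K[t^{\pm 1};\sigma]$ with $K$ a $\GKdim$-one field — to pin down $F_0(A)$ as commutative.

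The main obstacle I anticipate is precisely this last commutativity assertion: the dichotomy and the collapse $F_0(A) = u_P(A)$ are clean consequences of Lemma \ref{xxlem1.5}, but controlling the \emph{internal algebra structure} of $u_P(A)$ — showing it cannot be a genuinely noncommutative $\GKdim$-one domain — requires leveraging the graded/filtered geometry of $A$ in dimension two rather than a purely dimension-counting argument, and is the step most likely to need the field to be algebraically closed in an essential way.
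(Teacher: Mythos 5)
Your overall trichotomy on $\GKdim u_P(A)$ is sound, and the cases $\GKdim u_P(A)=2$ and $\GKdim u_P(A)=0$ are handled correctly (for the latter, note $\GKdim=0$ means locally finite-dimensional rather than finite-dimensional, but every element of a domain algebraic over the algebraically closed $\kk$ lies in $\kk$, so the conclusion stands). The genuine gap is in the central step of your main case. You want $F_0(A)\subseteq u_P(A)$ for every nontrivial filtration $\mathbf F$, and you propose to get it from Proposition \ref{xxpro1.6} applied internally to the $\GKdim$-one domain $F_0(A)$, combined with Lemma \ref{xxlem1.1}(6). That combination only works in one branch of the internal dichotomy: if $F_0(A)$ is $u_P$-trivial as an abstract algebra, then indeed $F_0(A)=u_P(F_0(A))\subseteq u_P(A)$. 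But if $F_0(A)$ falls in the $u_P$-free branch --- which nothing in your argument rules out; for instance $F_0(A)\cong\kk[x]$ is possible a priori, and $\kk[x]$ is $u_P$-free --- then Lemma \ref{xxlem1.1}(6) gives only $\kk=u_P(F_0(A))\subseteq u_P(A)$, and you get no control on $F_0(A)$ whatsoever. There is no contradiction between ``$F_0(A)=u_P(A)$ inside $A$'' and ``$F_0(A)$ is $u_P$-free as an abstract algebra,'' because $u_P(A)$ is not an intrinsic invariant of the subalgebra $F_0(A)$; so this branch cannot be dismissed, and your route stalls exactly there.

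The paper closes this gap with a cross-filtration argument that a single-filtration analysis cannot see. Set $X:=u_P(A)$ with $\GKdim X=1$ and let $\mathbf F$, $\mathbf G$ be two arbitrary nontrivial filtrations. First, any nontrivial good filtration satisfies $\GKdim F_0(A)\le 1$; here your suggested use of Lemma \ref{xxlem1.5} also needs repair, since with ``a $\GKdim$-one subalgebra'' and $Z=A$ the hypothesis reads $2<2$, which fails --- instead take $X=F_0(A)$ itself under the contradiction hypothesis $\GKdim F_0(A)=2$, so that $\GKdim A=2<3$ and the lemma forces $A\subseteq F_0(A)$. Then $Z:=G_0(A)$ contains $X$ and satisfies $\GKdim Z\le 1<\GKdim X+1$, so applying Lemma \ref{xxlem1.5} to $Y=A$ equipped with $\mathbf F$ gives $G_0(A)\subseteq F_0(A)$; intersecting over all nontrivial $\mathbf F$ yields $G_0(A)\subseteq u_P(A)$, hence equality, for every $\mathbf G$. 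Finally, commutativity is not the hard point you anticipate: $F_0(A)$ is a domain of GK-dimension at most one over an algebraically closed field, hence commutative by the standard Small--Warfield/Tsen fact (this is what the paper invokes), and $u_P(A)\subseteq F_0(A)$ inherits it. Your proposed Artin--Stafford route does not apply as stated, because $\gr_{\mathbf F}A$ has degree-zero part $F_0(A)\neq\kk$ and so is not connected graded, which the cited classification requires.
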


\begin{proof} 
Suppose $A$ is
not $u_P$-maximal.  Then take   a 
nontrivial filtration
${\mathbf F}$ of $A$. 
Then  $F_0(A)\neq A$. Since $A$ is a domain and ${\mathbf F}$ is a good filtration, we have 
$\gr_{\mathbf F}(A) = F_0(A) \oplus I$, 
where 
$I := \bigoplus_{i \geq 0} F_{i+1}(A)/F_i(A)$ 
is also a domain. 
Because $I \neq 0$, it contains a nonzero (and hence regular) element of $\gr_{\mathbf F}(A)$, and by \cite[Proposition~3.15]{KL-2000} it follows that 
\[
\GKdim \gr_{\mathbf F}(A) \geq \GKdim \big(\gr_{\mathbf F}(A)/I\big) + 1.
\]
Using the facts that 
$\GKdim A \geq \GKdim \gr_{\mathbf F}(A)$ (see \cite[Lemma~6.5]{KL-2000}) 
and 
$\GKdim F_0(A) = \GKdim \big(\gr_{\mathbf F}(A)/I\big)$, 
we obtain
\[
\GKdim A \geq \GKdim F_0(A) + 1.
\]

Thus $F_0(A)$ is a domain of GK-dimension 
at most one. Since $\Bbbk$ is algebraically 
closed, $F_0(A)$  is commutative. Since $u_{P}(A)$ 
is a subalgebra of $F_0(A)$, it is also a commutative domain.

If $\GKdim u_{P}(A)=0$, then $u_{P}(A)=\Bbbk$, 
which is Case (2). Otherwise, $u_{P}(A)$ is 
a commutative algebra of GK-dimension one; that is, it is in Case (3). 
Since $  X:=u_{P}(A)  \subseteq F_0(A) $, we have that 
$\GKdim F_0(A) =  1$. 
 Now consider another arbitrary
nontrivial filtration ${\mathbf G}$ of $A$.
By a similar argument, we have that $\GKdim G_0(A) =  1$.   Therefore,  $Z:=F_0(A)$ is a subalgebra of 
$Y:=A$ containing $X$ such that $\GKdim X=
\GKdim Z$. Now since 
$X \subseteq G_0 (A)$, 
it follows from Lemma \ref{xxlem1.5} that
$ Z =  F_0(A)\subseteq G_0(A)$. Since the filtration 
${\mathbf G}$ of $A$ is arbitrary, 
$ F_0(A) \subseteq u_{P}(A)$, and consequently, 
$F_0(A) = u_{P}(A)$. 
\end{proof}

Before we prove our main results, we give 
an example.

\begin{Ex}
\label{xxexa1.8}
{\rm 
Let $\Bbbk$ be an algebraically closed 
field of characteristic $p>0$ and let $A$ 
be the algebra
\begin{equation}
\label{E1.8.1}\tag{E1.8.1}
\Bbbk[X,Y,Z,T]/(X^{m}Y+Z^{p^e}+T+T^{sp})
\end{equation}
where $m>1, e, s$ are positive integers 
such that $p^e\nmid sp$ and $sp\nmid p^e$. 
By a result of Asanuma \cite[Theorem 5.1 and 
Corollary 5.3]{As-1987}, also see 
\cite[Theorem 1.1]{Gu3-2015},
$ A \otimes \kk [t] \cong A[t]\cong \Bbbk[t_1,t_2,t_3,t_4]$.
Gupta proved that $A\not\cong 
\Bbbk[t_1,t_2,t_3]$ \cite[Theorem 3.8]{Gu3-2015}, 
so this provides a counterexample to the ZCP 
in positive characteristic case.  In this example, we see that this algebra is cancellative with respect to 
affine domains of GK-dimension one that are not polynomial. Let us explain further, 
and let ${\mathcal R}$ denote the class consisting of all 
polynomial rings and the algebras of the form~\eqref{E1.8.1}.

%\footnote{Let us keep the original sentence here: Let ${\mathcal R}$ denote the class consisting of all polynomial rings and the algebras of the form~\eqref{E1.8.1}. }

\medskip
\noindent
Claim 1: Every affine domain over $\Bbbk$ 
of GK-dimension one is 
${\mathcal R}$-cancellative.

\noindent
{\it{ Proof of  Claim 1:}}
Let $A$ be an affine domain of GK-dimension 
one   and let $B$ be another algebra. Suppose
$A\otimes R\cong B\otimes R'$ for $R,R'$ 
in ${\mathcal R}$ of the same GK-dimension, 
say $n$. Then $R$ {\rm{(}}or $R'${\rm{)}} is 
either the polynomial ring or an algebra in 
\eqref{E1.8.1}. So both $R[t]$ and $R'[t]$ 
are polynomial rings by the property of 
algebras in \eqref{E1.8.1}. So 
$$A\otimes \Bbbk[t_1,\cdots,t_{n+1}]
\cong A\otimes R\otimes \Bbbk[t]
\cong B\otimes R'\otimes \Bbbk[t]
\cong B\otimes \Bbbk[t_1,\cdots,t_{n+1}].$$ 
By \cite[Corollary 3.4]{AEH-1972}, 
$A$ is cancellative (= invariant). Consequently, 
$A\cong B$. So $A \in 
\cR$. In particular,
$\Bbbk[t]\in \cR$.  

\medskip

\noindent
Claim 2: $\Bbbk[t]\not\in \bR$.

\noindent
{\it {Proof of Claim 2:}}
When $A=B=\Bbbk[x]$, one may take 
$R=\Bbbk[t_1,t_2,t_3]$ and $R'$ be the 
algebra defined in \eqref{E1.8.1}, then 
$A\otimes R\cong A\otimes R'$ for $R\not\cong R'$. 
This also shows that $\Bbbk[t]\not\in \bR$. 
As a consequence, $\bR\subsetneq \cR$. 

\medskip

\noindent
Claim 3: Let ${\mathcal C}$ be the set of 
affine domain of GK-dimension one that 
are not the polynomial ring $\Bbbk[t]$. 
Then $({\mathcal R}, {\mathcal C})$ is a  
bicancellative pair.

\noindent
{\it {Proof of Claim 3:}}
Let $A, A'$ be in ${\mathcal C}$ and 
$R, R'\in {\mathcal R}$. If $A\otimes R
\cong A'\otimes R'$, then $\GKdim A=
\GKdim A'=1$ implies that
$\GKdim R=\GKdim R'=:n$. Let $\phi: 
A\otimes R\to A'\otimes R'$ be an 
isomorphism of algebras. Let $\psi$ be 
the isomorphism $\phi\otimes Id_{\Bbbk[t]}:
A\otimes R\otimes \Bbbk[t]\to 
A'\otimes R'\otimes \Bbbk[t]$. Since both 
$R\otimes \Bbbk[t]$ and $R'\otimes \Bbbk[t]$ 
are isomorphic to $\Bbbk[t_1,\cdots,t_{n+1}]$, 
by the strongly invariant property of $A$ 
(see \cite[Corollary 3.4]{AEH-1972}), 
$\psi(A)=A'$. By restriction $\phi(A)=A'$. 

Let $I$ be an ideal of $A$ that has codimension one. Then $I':=\phi(I)$ is an 
ideal of $A'$ that has codimension one. Hence
\[R\cong \Bbbk \otimes R\cong A/I\otimes R \cong (A\otimes R)/(I) \xrightarrow{\phi/(I)} (A'\otimes R')/(I')\cong A'/I'\otimes R' \cong R'.\] So $(\mathcal {R}, \mathcal {C})$ is a bicancellative pair. }
\end{Ex}

%\begin{lemma}
%\label{xxlem1.9}
%Let $A$ be a domain and $B$ be a subalgebra of $A$. 
%Suppose $B$ has only trivial good filtration. For 
%every locally nilpotent derivation $\partial$ of 
%$A$, $\partial(B)=0$.
%\end{lemma}

%\begin{proof}
%Note that every locally nilpotent derivation 
%$\partial$ on $A$ induces a good filtration 
%$$G_i(A):=\{a\in A\mid \partial^{i+1}(a)=0\}.$$
%Let $F$ be the restriction of $G$ on $B$. Then 
%$F$ is a good filtration of $B$. By assumption, 
%$F_0(B)=B$. This implies that 
%$B=F_0(B)\subseteq G_0(A)=\ker \partial$. So 
%$\partial(B)=0$.
%\end{proof}

\section{Relative Cancellation problem}
\label{xxsec2}
In this section we prove the main theorems 
concerning the relative cancellation problem. 
We refer to papers \cite{Kr-1996, Gu3-2015, 
BZ1-2017, LWZ2019, HTW-2024} for the famous 
Zariski cancellation problem (ZCP) and some 
variations of cancellation problem for 
noncommutative algebras.
An  ${\mathbb N}$-graded $\kk$-algebra 
$ A = \bigoplus _{i \geq 0} A_i $ is called 
connected if $A_0 = \kk$. 
	Recall that a connected graded algebra $A$ is called a {\sf  $d$-dimensional Artin-Schelter regular 
		(AS-regular) algebra}  %of Gorenstein parameter $l$ 
	if $A$ satisfies the following conditions: 
	\begin{itemize}
		\item [(i)] ${\rm gldim}\,A=d<\infty$, 
		\item [(ii)] ${\rm GKdim}\,A<\infty$,  
		\item [(iii) ] 
		${\rm Ext}_{A}^{i}(\kk ,A)\cong \left\{
		\begin{array}{ll}
		k   &\quad (i=d), \\
		0   &\quad (i\neq d). 
		\end{array}
		\right.$
	\end{itemize}

 \begin{lemma}
\label{xxlem2.1} 
For a $\kk$-algebra $A$, the following statements hold:
\begin{enumerate}
\item[(1)]
Suppose $A$ has a firm filtration ${\mathbf F}$. 
Then $A$ is a firm domain.
\item[(2)]
The tensor product of two $\GKdim$-additive 
algebras is $\GKdim$-additive.
\item[(3)]
Let $A$ be a noetherian AS regular algebra. 
Then $A$ is $\GKdim$-additive. 
\end{enumerate}
\end{lemma}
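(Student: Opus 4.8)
**The plan is to prove the three parts of Lemma~\ref{xxlem2.1} in sequence, each leveraging the filtration machinery developed earlier.**

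For part (1), the plan is to invoke Lemma~\ref{xxlem1.1}(4) (tensor products of firm domains are firm) together with the standard ``lifting'' principle for filtered algebras. Given any domain $B$, I would equip $B$ with its trivial filtration, form the tensor-product filtration $\mathbf{H}$ on $A\otimes B$ as in \eqref{E1.1.1}, and use Lemma~\ref{xxlem1.2}(2) to identify $\gr_{\mathbf{H}}(A\otimes B)\cong \gr_{\mathbf{F}}A\otimes B$. Since $\gr_{\mathbf{F}}A$ is a firm domain by hypothesis and $B$ is an arbitrary domain, $\gr_{\mathbf{F}}A\otimes B$ is a domain; hence $\gr_{\mathbf{H}}(A\otimes B)$ is a domain. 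A filtered algebra whose associated graded ring is a domain is itself a domain (the standard leading-term argument, cf.\ \eqref{E1.2.1}), so $A\otimes B$ is a domain. As $B$ was arbitrary, $A$ is a firm domain by Definition~\ref{xxdef0.2}.

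For part (2), I would argue directly from Definition~\ref{xxdef0.5}. Let $R,S$ be $\GKdim$-additive, and let $A$ be any algebra. The goal is $\GKdim(A\otimes(R\otimes S))=\GKdim A+\GKdim R+\GKdim S$. Applying $\GKdim$-additivity of $S$ to the algebra $A\otimes R$ gives $\GKdim((A\otimes R)\otimes S)=\GKdim(A\otimes R)+\GKdim S$, and applying $\GKdim$-additivity of $R$ to $A$ gives $\GKdim(A\otimes R)=\GKdim A+\GKdim R$. Combining (and using associativity of the tensor product) yields the claim, so $R\otimes S$ is $\GKdim$-additive.

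For part (3), the plan is to reduce to a known GK-dimension formula for AS regular algebras. A noetherian AS regular algebra $A$ is connected graded of finite global dimension with polynomial growth, so $\GKdim A$ equals its global dimension $d$, and its Hilbert series is that of a commutative polynomial ring in $d$ variables up to the usual rational-function form. For any algebra $B$, I would bound $\GKdim(A\otimes B)\ge \GKdim A+\GKdim B$ by exhibiting, inside $A\otimes B$, a tensor product of generating subspaces whose growth multiplies; the reverse inequality $\GKdim(A\otimes B)\le \GKdim A+\GKdim B$ is the general inequality cited from \cite[Proposition 8.2.3]{MR-2001}. \textbf{The main obstacle is part (3):} establishing the lower bound $\GKdim(A\otimes B)\ge \GKdim A+\GKdim B$ in full generality (for arbitrary $B$, possibly of infinite or non-integer GK-dimension) requires care, since GK-dimension is not additive for arbitrary pairs. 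The resolution should exploit that AS regular algebras have \emph{finitely generated}, exactly computable Hilbert series with integer GK-dimension equal to the number of generators in the polynomial-growth sense, so that a generating subspace $V\subseteq A$ of dimension $d$ produces a genuinely ``free'' multiplicative contribution; I expect this to follow from the fact that $A$ is filtered (or itself graded) with $\gr A$ a polynomial ring, reducing the additivity to the already-known additivity of commutative polynomial rings cited after Definition~\ref{xxdef0.5}.
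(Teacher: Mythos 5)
Parts (1) and (2) of your proposal are correct and essentially the paper's own argument. For (1), the paper uses the filtration $F_i(A\otimes B):=F_i(A)\otimes B$, which is exactly your $\mathbf{H}$ when $B$ carries the trivial filtration, identifies $\gr(A\otimes B)\cong(\gr A)\otimes B$, and lifts the domain property; your invocation of Lemma \ref{xxlem1.1}(4) is superfluous (and in fact never used in your own argument), since the step only needs the definition of a firm domain applied to $\gr_{\mathbf F}A$ against the arbitrary domain $B$. For (2), the paper says ``clear,'' and your two-step computation is the correct filling-in.

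Part (3) contains a genuine gap. A minor issue first: the claim that $\GKdim A$ equals the global dimension of a noetherian AS regular algebra is unjustified (this is a nontrivial assertion, not part of the definition), though your argument does not really need it. The serious problem is your proposed resolution of the lower bound $\GKdim(A\otimes B)\geq \GKdim A+\GKdim B$, namely reducing to commutative polynomial rings via ``the fact that $A$ is filtered (or itself graded) with $\gr A$ a polynomial ring.'' No such filtration exists in general: quantum affine spaces $\kk_{\mathbf{q}}[x_1,\dots,x_n]$ and the Sklyanin algebras are noetherian AS regular, yet they admit no ${\mathbb N}$-filtration whose associated graded ring is a commutative polynomial ring (for the quantum plane, if $\gr A$ were a commutative domain, then $xy-yx=(q-1)yx$ would have to lie in $F_{i+j-1}$ while its leading term $(q-1)\gr(y)\gr(x)$ is nonzero of degree $i+j$, a contradiction when $q\neq 1$). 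Indeed, the paper itself must treat Sklyanin algebras in Lemma \ref{xxlem2.2}(c) by embedding them into iterated Ore extensions, precisely because no such degeneration is available. The paper's actual proof of (3) is different and avoids this: since $A$ is connected graded noetherian AS regular, it has a generating subspace $V\ni 1$ (e.g. $V=\kk+A_1+\cdots+A_m$ for suitable $m$) whose growth function $\dim V^n$ is, for $n\gg 0$, a polynomial in $n$ of degree $d=\GKdim A$, and then additivity follows at once from \cite[Lemma 8.2.4]{MR-2001}, whose hypothesis is exactly this kind of exact polynomial growth of one tensor factor. That is the ingredient your sketch is missing: what makes the lower bound work is not a degeneration to a polynomial ring, but the exact (rather than merely $\limsup$) polynomial growth supplied by the graded structure and the rationality of the Hilbert series.
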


\begin{proof}
(1) Let $B$ be another algebra. Then 
$A\otimes B$ has a filtration defined by 
$F_{i}(A\otimes B):=F_{i}(A)\otimes B$. Then 
$\gr (A\otimes B)\cong (\gr A)\otimes B$. 
If $B$ is a domain, so is $(\gr A)\otimes B$ 
since $\gr A$ is a firm domain. By lifting, 
$A\otimes B$ is a domain. So $A$ is a firm 
domain by definition. 

(2) Clear.

(3) There is a subspace of generators $V$ 
containing $1$ such that $\dim V^{n}$ is a 
multi-polynomial in $n$ of degree 
$d=\GKdim A$ for $n\gg 0$. So $A$ is 
$\GKdim$-additive by 
\cite[Lemma 8.2.4]{MR-2001}.
\end{proof}

Well-known examples of noetherian AS-regular algebras are Sklyanin algebras. We refer the reader to \cite[Section 4, Construction 4.1]{TV-1996} for the definition.
\medskip

\begin{lemma}
\label{xxlem2.2} 
Suppose ${\rm{char}}\; \Bbbk=0$. 
Then the following algebras are in 
${\mathcal R}_{1}$ \eqref{E0.5.4}.
\begin{enumerate}
\item[(a)]
The $n$th Weyl algebras for all $n\geq 1$.
\item[(b)]
The universal enveloping algebras over 
finite dimensional Lie algebras, including 
all commutative polynomial rings,
\item[(c)]
Every noetherian AS regular algebra that 
is a firm domain. In particular, all Sklyanin algebras, provided that $\Bbbk$ is algebraically closed.
\end{enumerate}
\end{lemma}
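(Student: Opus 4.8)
The plan is to verify, for each family (a)--(d), the three defining conditions of $\mathcal{R}_1$ from \eqref{E0.5.5}: being a firm domain, being $\GKdim$-additive, and being $u_{firm}$-free. The strategy is to reduce everything to the AS regular case (d) as much as possible, since Weyl algebras, enveloping algebras of finite-dimensional Lie algebras, and Sklyanin algebras are all noetherian AS regular (the first two after suitable identification, and Sklyanin algebras by the work of Artin--Tate--Van den Bergh over an algebraically closed field). So the main task is really to prove (d), and then check that (a)--(c) fall under its hypotheses by confirming each is a firm domain.

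For (d), suppose $A$ is noetherian AS regular and a firm domain. The firm-domain condition is assumed, so it remains to establish $\GKdim$-additivity and $u_{firm}$-freeness. The $\GKdim$-additivity is already handed to us by Lemma \ref{xxlem2.1}(3), which gives it for any noetherian AS regular algebra. For $u_{firm}$-freeness, I would use that an AS regular algebra carries its natural connected $\mathbb{N}$-graded structure, and this grading gives a firm filtration whose associated graded ring is $A$ itself (a firm domain by hypothesis), with $F_0(A) = \Bbbk$. Concretely, I want to exhibit, for each nonscalar element $f \in A \setminus \Bbbk$, a firm filtration in which $f$ has positive degree; the connected grading $A = \bigoplus_{i \geq 0} A_i$ with $A_0 = \Bbbk$ does exactly this, since any homogeneous component of positive degree lies outside $F_0$, and a general nonscalar $f$ has a nonzero positive-degree component. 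This forces $u_{firm}(A) = \Bbbk$, i.e. $A$ is $u_{firm}$-free.

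To deduce (a), (b), (c) from (d), I would verify that each is noetherian AS regular and a firm domain. Weyl algebras $\mathbb{A}_n$ in characteristic zero are noetherian AS regular and are firm domains (they are iterated Ore extensions of $\Bbbk$, so Lemma \ref{xxlem1.1}(3) applies repeatedly starting from the firm domain $\Bbbk$). Enveloping algebras $U(\mathfrak{g})$ of finite-dimensional Lie algebras are noetherian AS regular and, via the PBW filtration whose associated graded ring is the commutative polynomial ring, are firm domains by Lemma \ref{xxlem2.1}(1) together with Lemma \ref{xxlem1.1}(1) (the polynomial ring is a firm domain over an algebraically closed field, and the firmness statement is field-independent here since it only concerns the associated graded being a firm domain); commutative polynomial rings are the special case $\mathfrak{g}$ abelian. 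Sklyanin algebras over an algebraically closed field are noetherian AS regular connected graded domains, and firmness follows because they admit good filtrations whose associated graded is a (commutative, hence firm) polynomial ring, invoking Lemma \ref{xxlem2.1}(1) and Lemma \ref{xxlem1.1}(1).

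The main obstacle I anticipate is the firm-domain verification for the Sklyanin algebras in part (c): unlike the Weyl and enveloping algebras, these are not iterated Ore extensions, so Lemma \ref{xxlem1.1}(3) does not apply directly, and I must instead produce an explicit firm filtration (or firm grading) and argue that its associated graded ring is a firm domain. The cleanest route is to use the fact that over an algebraically closed field a Sklyanin algebra is a connected graded domain whose graded fraction ring can be analyzed as in Lemma \ref{xxlem1.1}(5), or alternatively to exhibit a good filtration whose associated graded is a commutative polynomial ring and then apply Lemma \ref{xxlem1.1}(1) and Lemma \ref{xxlem2.1}(1); the care required is in checking that such a filtration exists and that its graded ring is genuinely a firm domain rather than merely a domain.
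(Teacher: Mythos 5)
Your reduction of (a) and (b) to (d) breaks down at its first step: Weyl algebras and enveloping algebras of non-abelian finite-dimensional Lie algebras are \emph{not} AS regular in the sense this paper uses, because AS regularity presupposes a connected ${\mathbb N}$-graded structure, and these algebras carry none. The defining relations $x_iy_i-y_ix_i=1$ and $xy-yx=[x,y]$ are inhomogeneous; more decisively, the Weyl algebra ${\mathbb A}_n$ is simple in characteristic zero, whereas a connected graded algebra $\neq \Bbbk$ always has the proper nonzero ideal $A_{\geq 1}$. Consequently Lemma \ref{xxlem2.1}(3) cannot be invoked to get $\GKdim$-additivity in (a) and (b) (the paper instead quotes \cite[Proposition 8.2.7]{MR-2001}), and your $u_{firm}$-freeness argument, which rests entirely on ``the natural connected grading,'' has nothing to apply to. The repair is straightforward but must be made: use the Bernstein filtration on ${\mathbb A}_n$ ($\deg x_i=\deg y_i=1$) and the PBW filtration on $U(\mathfrak{g})$; both have $F_0=\Bbbk$ and associated graded ring a commutative polynomial ring, which is a firm domain over any field (since $B[x]$ is a domain for every domain $B$), so each is a firm filtration with $F_0=\Bbbk$, forcing $u_{firm}=\Bbbk$. (Your firm-domain verifications for (a) and (b), via iterated Ore extensions and Lemma \ref{xxlem2.1}(1), are fine; the paper happens to prove $u_{firm}$-freeness of (a) by a different route, through $\Aut$-stability and Theorem \ref{xxthm3.5}(1).)

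For (c), which you rightly single out as the main obstacle, neither of your proposed routes closes the gap. Lemma \ref{xxlem1.1}(5) applies only to connected graded domains of GK-dimension \emph{two}, while the Sklyanin algebras in question have GK-dimension $n\geq 3$; and no filtration of a Sklyanin algebra with commutative polynomial associated graded ring is exhibited (nor is one known --- the Sklyanin relations are not commutators modulo lower-order terms, so the filtration induced by the grading returns $S_n$ itself, not a polynomial ring). The paper supplies the missing input from outside: by \cite[Appendix A]{TV-1996}, $S_n$ embeds into an iterated Ore extension $T_n$ of a commutative domain $K$; then $K$ is a firm domain by Lemma \ref{xxlem1.1}(1), $T_n$ is one by Lemma \ref{xxlem1.1}(3), and $S_n$ is one by Lemma \ref{xxlem1.1}(2). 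Once firmness is in hand, your deduction of (c) from (d) is legitimate, since Sklyanin algebras over an algebraically closed field are noetherian AS regular; but firmness is precisely the point your proposal leaves unproved. Your treatment of (d) itself is correct and agrees with the paper's.
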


\begin{proof}
(a,b) By \cite[Proposition 8.2.7]{MR-2001}, the 
Weyl algebras and the universal enveloping 
algebras over  finite dimensional Lie algebras 
are $\GKdim$-additive. For each algebra $A$ 
in (a) or (b), there is a filtration ${\mathbf F}$ 
such that $\gr_{\mathbf F} A$ is isomorphic to 
the commutative polynomial ring. It is well-known 
that the commutative polynomial rings are 
firm domains. By Lemma \ref{xxlem2.1}(1), $A$ is 
a firm domain. It is well-known that if $A$ is in 
either (a) or (b), there is a 
filtration ${\mathbf F}$ such that 
$F_0(A)=\Bbbk$. Hence $u_{P}(A)=\Bbbk$. 

(c) Let $A$ be a  noetherian AS-regular algebra. Then it  is $\GKdim$-additive by Part 3 of Lemma \ref{xxlem2.1}. 
Since $A$ is connected graded, if it is a firm domain, then it is $u_{\rm firm}$-minimal; that is, $A \in {\mathcal R}_{1}$.  
If $S_n$ is a Sklyanin algebra of dimension $n$, it can be considered as a subalgebra of $T_n$, 
which is an iterated Ore extension of a commutative domain $K$, see \cite[Appendix A]{TV-1996}. 
Since $K$ is a firm domain [Lemma \ref{xxlem1.1}(1)], so is $T_n$ by Lemma \ref{xxlem1.1}(3). 
Finally, since $S_n$ is a subalgebra of $T_n$, $S_n$ is also a firm domain by Lemma \ref{xxlem1.1}(2).

%(d) Similar to the proof of part (c).
\end{proof}

Let $A$ be an algebra. Recall that 
$\Phi_{good}(A)$, $\Phi_{firm}(A)$, $u_{good}(A)$, 
and $u_{firm}(A)$ are defined in 
\eqref{E0.5.1}-\eqref{E0.5.3}. In the next 
few propositions we may identify a subalgebra 
of $A$, say $A'$, with the subalgebra 
$A'\otimes \Bbbk$ inside $A\otimes B$.

\begin{proposition}
\label{xxpro2.3} 
Let $A$ and $B$ be two domains.
\begin{enumerate}
\item[(1)]
$u_{good}(A)\otimes u_{good}(B)
\subseteq u_{good}(A\otimes B)
\subseteq u_{good}(A)\otimes u_{firm}(B)$.
\item[(2)]
 $u_{firm}(A\otimes B)= 
u_{firm}(A)\otimes u_{firm}(B)$.
\item[(3)]
Suppose that $B$ is $u_{firm}$-minimal. Then 
$u_{good}(A\otimes B)= u_{good}(A)$.
\end{enumerate}
\end{proposition}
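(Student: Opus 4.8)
The plan is to sandwich $u_{good}(A\otimes B)$ between the two stated subspaces by feeding filtrations of the factors into the tensor-product construction \eqref{E1.1.1} and invoking Lemma \ref{xxlem1.2}, together with one purely linear-algebraic fact about intersections of tensor products.

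For the lower bound in (1), I would first note that since $B$ is a firm domain and $A$ a domain, $A\otimes B$ is a domain, and the inclusions $A=A\otimes\Bbbk$ and $B=\Bbbk\otimes B$ exhibit $A$ and $B$ as subalgebras of this domain. Lemma \ref{xxlem1.1}(6) then yields $u_{good}(A)\subseteq u_{good}(A\otimes B)$ and $u_{good}(B)\subseteq u_{good}(A\otimes B)$. Because $u_{good}(A\otimes B)$ is a subalgebra, it is closed under the products $(a\otimes 1)(1\otimes b)=a\otimes b$, and hence contains $u_{good}(A)\otimes u_{good}(B)$, giving the first inclusion.

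For the upper bound in (1) I would fix an arbitrary good filtration ${\mathbf F}\in\Phi_{good}(A)$ and an arbitrary firm filtration ${\mathbf G}\in\Phi_{firm}(B)$; both families are nonempty since the trivial filtrations of the domain $A$ and the firm domain $B$ qualify. By Lemma \ref{xxlem1.2}(3) the filtration ${\mathbf H}$ of \eqref{E1.1.1} is then a good filtration of $A\otimes B$, so $u_{good}(A\otimes B)\subseteq H_0(A\otimes B)=F_0(A)\otimes G_0(B)$. Intersecting over all such pairs gives
$$u_{good}(A\otimes B)\ \subseteq\ \bigcap_{{\mathbf F}\in\Phi_{good}(A)}\ \bigcap_{{\mathbf G}\in\Phi_{firm}(B)} F_0(A)\otimes G_0(B).$$
The step I expect to be the main technical point is the identity $\bigcap_{i,j}(U_i\otimes W_j)=(\bigcap_i U_i)\otimes(\bigcap_j W_j)$ for subspaces $U_i\subseteq A$ and $W_j\subseteq B$; even for infinite index sets this holds, as one sees by fixing a basis of one tensor factor and writing each element uniquely in coordinates, so membership in $U\otimes W_j$ is checked coefficientwise. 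Applying this identity twice collapses the double intersection to $u_{good}(A)\otimes u_{firm}(B)$, which is exactly the desired upper bound.

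Parts (2) and (3) are then short. For (2), assuming in addition that $A$ is a firm domain, I would rerun the upper-bound argument but restrict to firm filtrations ${\mathbf F}\in\Phi_{firm}(A)$ and ${\mathbf G}\in\Phi_{firm}(B)$: Lemma \ref{xxlem1.2}(4) makes the associated ${\mathbf H}$ a firm filtration of $A\otimes B$, and the same intersection identity delivers $u_{firm}(A\otimes B)\subseteq u_{firm}(A)\otimes u_{firm}(B)$. For (3), when $B$ is $u_{firm}$-free the upper bound in (1) reads $u_{good}(A\otimes B)\subseteq u_{good}(A)\otimes\Bbbk=u_{good}(A)$, while the lower bound in (1) (or directly Lemma \ref{xxlem1.1}(6) applied to $A\subseteq A\otimes B$) gives $u_{good}(A)\subseteq u_{good}(A\otimes B)$; these two inclusions force the equality $u_{good}(A\otimes B)=u_{good}(A)$.
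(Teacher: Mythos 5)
Your proposal is correct and follows essentially the same route as the paper: subalgebra inclusions (Lemma \ref{xxlem1.1}(6)) plus the subalgebra property of $u_{good}(A\otimes B)$ for the lower bound, and tensor-product filtrations via Lemma \ref{xxlem1.2}(3,4) together with the identity $\bigcap_{i,j}(U_i\otimes W_j)=(\bigcap_i U_i)\otimes(\bigcap_j W_j)$ for the upper bounds and parts (2) and (3). The only difference is cosmetic: you explicitly justify the intersection--tensor identity, which the paper uses without comment.
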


\begin{proof}
(1) Since $A$ is a subalgebra of $A\otimes B$, 
we have $u_{good}(A)\subseteq 
u_{good}(A\otimes B)$. Similarly, 
$u_{good}(B)\subseteq u_{good}(A\otimes B)$. 
Therefore $u_{good}(A)\otimes u_{good}(B)
\subseteq u_{good}(A\otimes B)$.

For $u_{good}(A\otimes B)\subseteq 
u_{good}(A)\otimes u_{firm}(B)$, let 
${\mathbf F}$ be a good filtration of $A$ 
and ${\mathbf G}$ be a firm filtration of $B$. 
By Lemma \ref{xxlem1.2}(3), 
${\mathbf H}:={\mathbf F}\otimes {\mathbf G}$ 
is a good filtration of $A\otimes B$ with 
$H_0=F_0(A)\otimes G_0(B)$. So 
$$\begin{aligned}
u_{good}(A\otimes B)&=\bigcap_{{\mathbf H}
\in \Phi_{good}(A\otimes B)} H_0
\subseteq 
\bigcap_{{\mathbf H}={\mathbf F}\otimes {\mathbf G},
{\mathbf F}\in 
\Phi_{good}(A), {\mathbf G}\in \Phi_{firm}(B)} 
F_0(A)\otimes G_0(B)\\
&=\Bigl[\bigcap_{{\mathbf F}\in \Phi_{good}(A)} F_0(A)\Bigr]
\otimes 
\Bigl[\bigcap_{{\mathbf G}\in \Phi_{firm}(B)} G_0(B)\Bigr]
=u_{good}(A)\otimes u_{firm}(B).
\end{aligned}
$$

(2) The proof is similar to the proof of
part (1), and we use Lemma \ref{xxlem1.2}(4) 
for it.

(3) This is an immediate consequence of part (1).
\end{proof}

The group of all $\Bbbk$-algebra automorphisms 
of $A$ is denoted by $\Aut_{alg}(A)$.
If $B$ is a subalgebra of $A$, then we 
set
$$\Aut_{alg}(A\mid B):=
\{\sigma\in \Aut_{alg}(A) \mid \sigma(b)=b 
{\text{ for all $b\in B$}}\}.
$$
Now we prove our main results.

\begin{thm}
\label{xxthm2.4}
Let ${\mathcal R}_{1}$ be defined as in 
\eqref{E0.5.4}. Suppose that $A$ is a 
$u_{good}$-maximal domain of finite 
GK-dimension. Then the following hold:
\begin{enumerate}
\item[(1)]
$A$ is ${\mathcal R}_{1}$-cancellative.
\item[(2)]
If $A$ has an ideal of codimension one
{\rm{(}}for example, $A$ is affine and 
commutative over an algebraically closed 
field{\rm{)}}, then $A$ is 
${\mathcal R}_{1}$-bicancellative.
\item[(3)]
If $R\in {\mathcal R}_{1}$, then the 
automorphism group of $A\otimes R$ fits 
into the following short exact sequence
$$1\to \Aut_{alg}(A\otimes R\mid A)\to 
\Aut_{alg}(A\otimes R)\to \Aut_{alg}(A)\to 1.$$
\end{enumerate}
\end{thm}

\begin{proof}
(1) Suppose that $B$ is an algebra such 
that $\phi:A\otimes R\to  B\otimes R'$ is 
an isomorphism where $R,R'\in {\mathcal R}_{1}$ 
are of the same GK-dimension. Since $R$ and 
$R'$ are $\GKdim$-additive, we have 
$$ \GKdim A=\GKdim (A\otimes R)-\GKdim R
=\GKdim (B\otimes R')-\GKdim R'=\GKdim B.$$
We identify $A$ (resp. $B$) with the 
subalgebra $A\otimes \Bbbk$ inside $A\otimes R$
(resp. $B\otimes \Bbbk$ inside $B\otimes R'$).
Since $A$ is $u_{good}$-maximal, we have
$$\begin{aligned}
A&= u_{good}(A) =u_{good}(A\otimes R)\qquad \;  \qquad 
{\text{by Proposition \ref{xxpro2.3}(3)}}\\
&\cong u_{good}(B\otimes R') \qquad \qquad \quad 
\qquad \qquad 
{\text{via the isomorphism $\phi$}}\\
& =  u_{good}(B) \qquad \qquad \qquad \; \quad 
\qquad \qquad 
{\text{by Proposition \ref{xxpro2.3}(3)}}\\
&\subseteq B.
\end{aligned}$$
So $\phi$ maps $A$ into $B$. It remains to show 
that $\phi\mid_{A}$ is surjective. To apply Lemma \ref{xxlem1.5} we consider a 
good filtration ${\mathbf H}$ on $Y:=
A\otimes R\cong B\otimes R'$, and let 
$X:=A$. Then $X$ is a 
subalgebra of $H_0(Y)$. Let $Z:=\phi^{-1}(B)$. 
Then $Z$ is a subalgebra of $Y$ containing 
$X(\cong u_{good}(Y))$ such that 
$\GKdim Z=\GKdim B= \GKdim X$. By 
Lemma \ref{xxlem1.5}, $Z\subseteq H_0(Y)$. 
This means that $Z\subseteq u_{good}(Y)=X$ 
by taking the intersection over all good 
filtrations ${\mathbf H}$. So 
$\phi^{-1}(B)=Z=X$ as required.

(2) It remains to show that $R\cong R'$. Let 
$I$ be an ideal of $A$ of codimension one. 
Then $A/I\cong \Bbbk$. Since $\phi$ maps $A$ 
to $B$ as an isomorphism by part (1), 
$J:=\phi(I)$ is an ideal of $B$ of codimension 
one. Thus
$$R\cong \Bbbk\otimes R\cong (A/I)\otimes R
\cong 
(A\otimes R)/(I)\xrightarrow{\phi \; \cong} 
(B\otimes R')/(J)\cong (B/J)\otimes R'\cong R'$$
as required.

(3) In the setting of the proof of part (1), 
an algebra isomorphism $\phi: A\otimes R\to 
B\otimes R'$ restricts to an algebra 
isomorphism $\phi\mid_{A}: A\to B$. Now, let $A=B$
and $R=R'$. Then every algebra automorphism 
$\phi\in \Aut_{alg}(A\otimes R)$ restricts to 
an algebra automorphism $\phi\mid_{A}\in 
\Aut_{alg}(A)$. Hence $\Psi:\phi\to\phi\mid_{A}$ 
is a group homomorphism from 
$\Aut_{alg}(A\otimes R)\to \Aut_{alg}(A)$.
For every $\sigma\in \Aut_{alg} (A)$, 
$\phi:=\sigma\otimes Id_{R}$ is an 
automorphism of $A\otimes R$ such that 
$\sigma=\phi\mid_{A}$. So $\Psi$ is surjective. 
It is clear that the kernel of $\Psi$ is 
$\Aut_{alg}(A\otimes R\mid A)$. Therefore 
the short exact sequence follows.
\end{proof}

Similarly we have 

\begin{thm}
\label{xxthm2.5}
Let ${\mathcal R}_{2}$ be defined as in 
\eqref{E0.7.1}. Suppose $A$ is a firm domain 
of finite GK-dimension that is $u_{good}$-maximal. 
Then the following hold:
\begin{enumerate}
\item[(1)] 
$A$ is ${\mathcal R}_{2}$-cancellative.
\item[(2)]
If $A$ has an ideal of codimension one 
{\rm{(}}for example, $A$ is affine and 
commutative over an algebraically closed 
field{\rm{)}}, then $A$ is 
${\mathcal R}_{2}$-bicancellative.
\item[(3)]
If $R\in {\mathcal R}_{2}$, then the automorphism 
group of $A\otimes R$ fits into the short 
exact sequence
$$1\to \Aut_{alg}(A\otimes R\mid A)\to 
\Aut_{alg}(A\otimes R)\to \Aut_{alg}(A)\to 1.$$
\end{enumerate}
\end{thm}

The proof of Theorem \ref{xxthm2.5} is omitted 
since it is similar to the proof of Theorem \ref{xxthm2.4}.  

%\begin{proof}
%(1) Suppose $B$ is another algebra such that 
%$\phi:A\otimes R\cong B\otimes R'$ is an 
%isomorphism where $R,R'\in {\mathcal R}_{2}$ 
%are of the same GK-dimension. 
%Since $R$ and $R'$ are $\GKdim$-additive
%$$ \GKdim A=\GKdim (A\otimes R)-\GKdim R
%=\GKdim (B\otimes R')-\GKdim R'=\GKdim B.$$
%We identify $A$ (resp. $B$) with the subalgebra 
%$A\otimes \Bbbk$ inside $A\otimes R$
%(resp. $B\otimes \Bbbk$ inside $B\otimes R'$).
%Since $A$ is $u_{good}$-maximal and $R$ is 
%$u_{good}$-minimal, we have
%$$A=u_{good}(A)\subseteq u_{good}(A\otimes R)
%\subseteq u_{good}(R)\otimes u_{firm}(A)= 
%\Bbbk \otimes u_{firm}(A) =u_{firm}(A)\subseteq A$$
%where the middle $\subseteq$ uses Proposition 
%\ref{xxpro2.3}(1). Hence $u_{good}(A)=
%u_{good}(A\otimes R)=A$. 
%Now we have
%$$\begin{aligned}
%A&= u_{good}(A)
%=u_{good}(A\otimes R)\\
%&\cong u_{good}(B\otimes R') 
%\cong u_{good}(R'\otimes B)
%\qquad  \quad \; \qquad \qquad 
%{\text{via the isomorphism $\phi$}}\\
%&\subseteq u_{firm}(B) \qquad \qquad \qquad 
%\quad \qquad \qquad \qquad \qquad \qquad 
%{\text{by Proposition \ref{xxpro2.3}(3)} } \\
%&\subseteq B.
%\end{aligned}$$ 
%See the proof of Theorem \ref{xxthm2.4}(1) for 
%the rest of the proof.
%
%(2) Similar to the proof of Theorem \ref{xxthm2.4}(2).
%
%(3) See the proof of Theorem \ref{xxthm2.4}(3).
%\end{proof}

\begin{proof}[Proof of Corollary \ref{xxcor0.7}] 
By Proposition \ref{xxpro1.6}, $A$
is $u_{good}$-maximal.
Since $A$ is affine and commutative, it has an ideal of codimension one. The assertion 
follows from Theorem \ref{xxthm2.4}(2).
\end{proof}

Recall that ${\mathcal R}_{3}$ and ${\mathcal C}_{3}$ 
are defined as in \eqref{E0.8.1}-\eqref{E0.8.2}.

\begin{thm}
\label{xxthm2.6}
Suppose the base field $\Bbbk$ is algebraically closed. 
Then the following hold:
\begin{enumerate}
\item[(1)]
$({\mathcal R}_{3}, {\mathcal C}_{3})$ is a 
bicancellative pair.
\item[(2)]
If $R\in {\mathcal R}_{3}$ and $A\in {\mathcal C}_{3}$, 
then the automorphism group of $A\otimes R$ fits 
into the short exact sequence
$$1\to \Aut_{alg}(A\otimes R\mid R)\to 
\Aut_{alg}(A\otimes R)\to \Aut_{alg}(R)\to 1.$$
\end{enumerate}
\end{thm}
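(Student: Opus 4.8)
The plan is to pass to the centers, recover $R$ there as a canonical subalgebra, and then peel $R$ off by a central quotient to recover $C$. Let $\phi\colon C\otimes R\xrightarrow{\cong}C'\otimes R'$ be the given isomorphism, with $C,C'\in\mathcal C_3$ and $R,R'\in\mathcal R_3$. First I would use that centers factor through tensor products of $\kk$-algebras: since $R$ is commutative, $Z(C\otimes R)=Z(C)\otimes Z(R)=Z(C)\otimes R$, and likewise on the primed side; the linear-independence argument for this is routine. Because $\phi$ preserves centers it restricts to an isomorphism $\psi\colon Z(C)\otimes R\xrightarrow{\cong}Z(C')\otimes R'$ of commutative domains. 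These are domains: $Z(C),Z(C')$ are commutative domains, hence firm over the algebraically closed $\kk$ by Lemma \ref{xxlem1.1}(1), and the tensor of a firm domain with a domain is a domain by Definition \ref{xxdef0.2}.

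Next I would pin down $R$ intrinsically inside $Z(C)\otimes R$. Since $Z(C)$ is a commutative domain over an algebraically closed field, every good filtration of $Z(C)$ is firm (its associated graded is again a commutative domain, hence firm), so $\Phi_{good}(Z(C))=\Phi_{firm}(Z(C))$ and therefore $u_{firm}(Z(C))=u_{good}(Z(C))=\kk$; that is, $Z(C)$ is $u_{firm}$-free. Applying Proposition \ref{xxpro2.3}(3) with the domain $R$ and the firm $u_{firm}$-free domain $Z(C)$ (after the harmless flip $Z(C)\otimes R\cong R\otimes Z(C)$) gives $u_{good}(Z(C)\otimes R)=u_{good}(R)=R$, the last equality because $R$ is $u_{good}$-trivial; here $R$ denotes the copy $1\otimes R$. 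The same computation yields $u_{good}(Z(C')\otimes R')=R'$. Since $u_{good}$ is preserved by algebra isomorphisms (push good filtrations forward through $\psi$), we obtain $\psi(R)=R'$, so $\phi$ restricts to an isomorphism $\phi|_R\colon R\xrightarrow{\cong}R'$, and in particular $R\cong R'$.

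The main obstacle is the remaining claim $C\cong C'$, since the $u_{good}$-invariant only detects the commutative part and $Z(C)$ is merely $u_{good}$-free, not trivial. To resolve it I would exploit that $R$ is affine commutative over the algebraically closed $\kk$: choose a maximal ideal $\mathfrak m\triangleleft R$, so $R/\mathfrak m=\kk$ by the Nullstellensatz. As $\mathfrak m\subseteq R\subseteq Z(C\otimes R)$, the two-sided ideal it generates is central and equals $C\otimes\mathfrak m$, whence
$$(C\otimes R)/(C\otimes\mathfrak m)\cong C\otimes(R/\mathfrak m)=C.$$
Writing $\mathfrak m':=\phi|_R(\mathfrak m)$, a maximal ideal of $R'$ with $R'/\mathfrak m'\cong\kk$, the isomorphism $\phi$ carries the central ideal generated by $\mathfrak m$ onto the one generated by $\mathfrak m'$, i.e.\ $C\otimes\mathfrak m$ onto $C'\otimes\mathfrak m'$, and hence descends to an isomorphism $C\cong C'$. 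This proves (1); the crucial device is that a codimension-one central maximal ideal survives $\phi$ and splits off the $R$-factor.

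For (2), specialize the argument of (1) to $C=C'=A$ and $R=R'$. Running the first two steps verbatim shows every $\phi\in\Aut_\kk(A\otimes R)$ fixes the central subalgebra $R=1\otimes R=u_{good}(Z(A\otimes R))$ setwise, so restriction defines a group homomorphism $F\colon\Aut_\kk(A\otimes R)\to\Aut_\kk(R)$, $\phi\mapsto\phi|_R$. It is surjective because $\sigma\mapsto\id_A\otimes\sigma$ is a section, and its kernel is by definition $\Aut_R(A\otimes R)$, the automorphisms fixing $R$ pointwise; this gives the asserted short exact sequence.
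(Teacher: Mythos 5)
Your proof is correct and follows essentially the same route as the paper: restrict $\phi$ to centers, use Proposition \ref{xxpro2.3} to identify $1\otimes R$ canonically as $u_{good}$ of the center (hence $R\cong R'$), then quotient by the central ideal generated by a codimension-one ideal of $R$ to get $C\cong C'$, with part (2) following by restriction exactly as in the paper. The only cosmetic difference is that you invoke Proposition \ref{xxpro2.3}(3) (after flipping factors, using that $Z(C)$ is $u_{firm}$-free) where the paper applies the sandwich of Proposition \ref{xxpro2.3}(1) directly with $R$ as the firm factor; these are interchangeable since part (3) is an immediate consequence of part (1).
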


\begin{proof}
(1) Let $R,R'$ be in ${\mathcal R}_{3}$
and $A, A'$ be in ${\mathcal C}_{3}$ such 
that $\phi: A\otimes R\to A'\otimes R'$
is an isomorphism. 
Let $C=Z(A)$ and $C'=Z(A')$. Then $\phi$ 
restricts to an isomorphism
$C\otimes R\xrightarrow{\cong} C'\otimes R'$
which is also denoted by $\phi$. 
Since $ A, A' \in {\mathcal C}_{3}$, their 
centers, namely $C$ and $C'$, are domains.
By Proposition \ref{xxpro2.3}(1),
$$R=u_{good}(R)\subseteq
u_{good}(C\otimes R)\subseteq u_{good}(C)
\otimes u_{firm}(R)=R,$$ 
which implies that $R=u_{good}(R)=
u_{good}(C\otimes R)$. Similarly,
$R'=u_{good}(R')=u_{good}(C'\otimes R')$.
Now
$$ R'=u_{good}(R') 
=u_{good}(C'\otimes R')
\xrightarrow{\phi^{-1}}
u_{good}(C\otimes R)=R,$$
which implies that $R$ is isomorphic to $R'$. 
Let $I$ be an ideal of $R$ that has 
codimension one and let $J=\phi(I)$. Then 
$$A\cong A\otimes \Bbbk\cong A\otimes (R/I)
=(A\otimes R)/(I)\xrightarrow{\phi/(I)} 
(A'\otimes R')/(J)=A'\otimes (R'/J)
\cong A'\otimes \Bbbk\cong A'$$
as required. 

(2) As in the setting of the proof of part (1), 
an algebra isomorphism $\phi: A\otimes R\to 
A'\otimes R'$ restricts to an algebra 
isomorphism $\phi\mid_{R}: R\to R'$. Now let 
$A=A'$ and $R=R'$. Then every algebra 
automorphism $\phi\in \Aut_{alg}(A\otimes R)$ 
restricts to an algebra automorphism 
$\phi\mid_{R}\in \Aut_{alg}(R)$. Hence the map
$\Psi:\phi\to\phi\mid_{R}$ is a group homomorphism $\Aut_{alg}(A\otimes R)\to \Aut_{alg}(R)$.
For every $\sigma\in \Aut_{alg}(R)$, 
$\phi:=Id\mid_{A}\otimes \sigma$ is an 
automorphism of $A\otimes R$ such that 
$\sigma=\phi\mid_{R}$. So $\Psi$ is surjective. It 
is clear that the kernel of $\Psi$ is 
$\Aut_{alg}(A\otimes R\mid R)$. So the short 
exact sequence follows.
\end{proof}

\section{Stable subspaces and Characterization Problem}
\label{xxsec3}

First we recall a definition. A subspace (resp. 
subalgebra) $V$ of $A$ is called {\sf $\Aut$-stable} 
if for every automorphism $\sigma$ of $A$,
$\sigma(V)\subseteq V$.

\begin{lemma}
\label{xxlem3.1}
For every property $P$, $u_{P}(A)$ is an 
$\Aut$-stable subalgebra of $A$.
\end{lemma}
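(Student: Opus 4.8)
The plan is to note first that $u_P(A)$ is already known to be a subalgebra of $A$ (observed in the text immediately after Definition \ref{xxdef0.3}), so the substance of the claim is the $\Aut$-stability. The key construction is to transport filtrations along automorphisms: given $\sigma \in \Aut_{\Bbbk}(A)$ and a filtration $\mathbf{F} = \{F_i(A)\}_{i\geq 0} \in \Phi_P(A)$, define a new sequence $\sigma\mathbf{F}$ by $(\sigma\mathbf{F})_i(A) := \sigma(F_i(A))$ for all $i \geq 0$.

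First I would verify that $\sigma\mathbf{F}$ is again an $\mathbb{N}$-filtration. Since $\sigma$ fixes $\Bbbk$ and preserves inclusions, the chain condition $\Bbbk \subseteq (\sigma\mathbf{F})_i(A) \subseteq (\sigma\mathbf{F})_{i+1}(A)$ holds; applying $\sigma$ to $A = \bigcup_i F_i(A)$ yields exhaustiveness; and multiplicativity follows from $\sigma(F_i(A))\sigma(F_j(A)) = \sigma(F_i(A)F_j(A)) \subseteq \sigma(F_{i+j}(A))$, as $\sigma$ is an algebra homomorphism. Next I would observe that $\sigma$ induces an isomorphism of associated graded algebras $\gr_{\mathbf{F}}(A) \xrightarrow{\cong} \gr_{\sigma\mathbf{F}}(A)$, carrying $F_i(A)/F_{i-1}(A)$ onto $\sigma(F_i(A))/\sigma(F_{i-1}(A))$. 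Because $P$ is an algebra property, hence invariant under algebra isomorphism, $\gr_{\sigma\mathbf{F}}(A)$ has property $P$, so $\sigma\mathbf{F} \in \Phi_P(A)$.

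The remaining step is a reindexing argument. The assignment $\mathbf{F} \mapsto \sigma\mathbf{F}$ is a bijection of $\Phi_P(A)$ onto itself, with inverse induced by $\sigma^{-1}$. Since $\sigma$ is bijective, the image of an intersection equals the intersection of images, so
$$\sigma(u_P(A)) = \sigma\Big(\bigcap_{\mathbf{F}\in\Phi_P(A)} F_0(A)\Big) = \bigcap_{\mathbf{F}\in\Phi_P(A)} \sigma(F_0(A)) = \bigcap_{\mathbf{F}\in\Phi_P(A)} (\sigma\mathbf{F})_0(A).$$
As $\mathbf{F}$ runs over $\Phi_P(A)$, so does $\sigma\mathbf{F}$, whence the last intersection equals $\bigcap_{\mathbf{G}\in\Phi_P(A)} G_0(A) = u_P(A)$. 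Therefore $\sigma(u_P(A)) = u_P(A)$, which gives the required $\Aut$-stability, in fact in the stronger form of an equality.

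I expect the only point requiring genuine care to be the preservation of property $P$ under the graded isomorphism induced by $\sigma$; this rests on the implicit convention that an \emph{algebra property} is one invariant under algebra isomorphism. Everything else reduces to a routine check of the three filtration axioms and a set-theoretic manipulation of intersections under a bijection.
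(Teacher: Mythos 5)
Your proposal is correct and takes essentially the same approach as the paper: transport filtrations along the automorphism, check that this operation permutes $\Phi_P(A)$, and conclude that the intersection defining $u_P(A)$ is preserved (the paper phrases this as pulling back filtrations along $\sigma^{-1}$ for a general isomorphism $A\to B$ and then setting $A=B$, but the mechanism is identical). Your write-up is in fact more careful than the paper's, since you explicitly verify the filtration axioms, the induced graded isomorphism, and the invariance of $P$, and you record the stronger conclusion $\sigma(u_P(A))=u_P(A)$.
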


\begin{proof}
Let $\sigma$ be an algebra isomorphism from $A$ to 
$B$. Suppose there is a filtration ${\mathbf F}$ 
of $B$. Define a filtration, still denoted by 
${\mathbf F}$, of $A$, by
$$F_i(A)=\sigma^{-1} F_i(B)$$
for all $i\geq 0$. Then $\sigma$ maps $F_0(A)$ 
to $F_0(B)$. So $\sigma$ induces an algebra 
isomorphism from $u_{P}(A)\to u_{P}(B)$. When 
$A=B$, $\sigma$ maps $u_{P}(A)$ to itself. This 
means that $u_{P}(A)$ is $\Aut$-stable. 
\end{proof}

In general an algebraic (or geometric) 
invariant of an algebra is always $\Aut$-stable. 

\begin{Ex}
\label{xxexa3.2}
A connected ${\mathbb N}$-graded noncommutative 
algebra has a nontrivial $\Aut$-stable subalgebra. 
To see this let $A$ be such an algebra with graded 
maximal ideal $I:=A_{\geq 1}$. Let $B$ be the 
subalgebra generated by commutators $xy-yx$ for 
all $x, y\in A$. Clearly $B$ is an $\Aut$-stable 
subalgebra. Since $A$ is connected graded, 
$B_{\geq 1}$ is a subspace of $I^2$ which is a 
nontrivial subspace of $I$. This implies that 
$B\neq A$. Since $A$ is not commutative, 
$B\neq \Bbbk$. So $B$ is a proper subalgebra. 
For example, any free algebra $\Bbbk\langle x_1,
\cdots, x_n\rangle$ with $n\geq 2$ has some 
nontrivial $\Aut$-stable subalgebra.
\end{Ex}

Usually it is easy to construct $\Aut$-stable 
subalgebras/subspaces of an algebra. However 
the next result says that it is possible for 
some algebras to have no  nontrivial $\Aut$-stable 
subspace.

\begin{thm}
\label{xxthm3.3}
Suppose ${\rm{char}}\; \Bbbk=0$. Let $A:=
\Bbbk[z_1,\cdots,z_m]$ for some $m\geq 1$
{\rm{(}}$m$ may be infinite{\rm{)}}.
\begin{enumerate}
\item[(1)]
If $m\geq 2$, $A$ has no nontrivial $\Aut$-stable 
subspace.
\item[(2)]
If $m=1$, $A$ has no nontrivial $\Aut$-stable 
subalgebra and every nontrivial  $\Aut$-stable 
subspace is of the form $(\Bbbk+\Bbbk z_1)^d$ for 
some $d\geq 1$.
\end{enumerate}
\end{thm}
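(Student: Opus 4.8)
The plan is to exploit the rich automorphism group of the polynomial ring, in particular the translations, the scalings, and the elementary (triangular) automorphisms, to show that any nonzero $\Aut$-stable subspace $V$ must already contain every monomial, hence be all of $A$. First I would reduce the problem to monomials: since $V$ is a subspace, it suffices to show that whenever $V$ contains a nonzero polynomial $f$, it contains every monomial appearing in $f$, and then that it contains monomials of every degree in every variable. The key separation tool is the one-parameter group of scalings $\sigma_{t}(z_i)=t\,z_i$ for $t\in\Bbbk^{\times}$: if $f=\sum_d f_d$ is the decomposition of $f$ into homogeneous components, then $\sigma_t(f)=\sum_d t^{d} f_d\in V$ for all $t$, and since $\Bbbk$ has characteristic zero it is infinite, so a Vandermonde argument forces each homogeneous component $f_d\in V$. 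Thus one may assume $f$ is homogeneous.

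\smallskip

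Next I would use more refined diagonal automorphisms to peel off individual monomials. For a homogeneous $f$, consider the torus action $\tau_{\underline{t}}(z_i)=t_i z_i$ with independent parameters $t_1,\dots,t_m\in\Bbbk^{\times}$; each monomial $z^{\alpha}$ is a common eigenvector with distinct character $\underline{t}^{\alpha}$, so another Vandermonde/independence-of-characters argument over the infinite field $\Bbbk$ shows that every individual monomial occurring in $f$ lies in $V$. Hence $V$ contains at least one monomial, say $z^{\alpha}$. The heart of the argument, and the place where $m\geq 2$ is essential, is to propagate from a single monomial to all monomials. Here I would apply elementary automorphisms such as $\sigma(z_1)=z_1+z_2,\ \sigma(z_j)=z_j$ (needing a second variable), together with the monomial-extraction step above: applying $\sigma$ to $z_1^{a}z_2^{b}\cdots$ produces, via the binomial expansion, monomials with the exponent shifted between $z_1$ and $z_2$, and extracting components shows $V$ contains a whole family of monomials of the same total degree. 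Combining translations $z_i\mapsto z_i+1$ (which lower degree) with scalings lets one move between total degrees as well.

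\smallskip

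The hard part will be organizing the propagation step into a clean induction that shows transitivity of the $\Aut$-action (after the component-extraction) on the set of all monomials, i.e. that starting from any one monomial one reaches every monomial $z^{\beta}$. I expect the cleanest route is: (i) from $z^{\alpha}$, use translations $z_i\mapsto z_i+\lambda$ and component extraction to descend to the constant $1\in V$, so $1\in V$; (ii) then, since $1\in V$ and $V$ is $\Aut$-stable, apply a general affine or elementary automorphism $z_i\mapsto z_i+c$ and subtract to produce the degree-one elements $z_i\in V$; (iii) finally bootstrap to higher degrees by applying automorphisms that create cross terms and extracting homogeneous monomial components, using $m\geq 2$ to generate all exponent vectors. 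Once every monomial lies in $V$, we conclude $V=A$, proving (1). The $m=1$ case of (2) then follows from the same machinery with the crucial difference that $\Aut_{\Bbbk}(\Bbbk[z_1])$ consists only of affine maps $z_1\mapsto a z_1+b$, so the cross-term propagation is unavailable; one checks directly that the only $\Aut$-stable subspaces are the filtration pieces $(\Bbbk+\Bbbk z_1)^{d}=\bigoplus_{j\le d}\Bbbk z_1^{j}$, while $\Aut$-stable \emph{subalgebras} must be all of $\Bbbk[z_1]$ or $\Bbbk$ and the latter is excluded by the affine translations moving $z_1$.
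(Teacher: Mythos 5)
Your proposal is correct in substance and follows the same overall architecture as the paper's proof (reduce to monomials, then propagate to all monomials via triangular automorphisms and a Vandermonde-type argument), but your extraction mechanism is genuinely different and arguably cleaner. The paper obtains its ``Claim 1'' ($\Bbbk+\sum_i\Bbbk z_i\subseteq B$) by an induction on degree: it repeatedly applies translations $z_i\mapsto z_i+1$ and subtracts, using the difference operator to lower degree until a degree-one element appears. You instead use the scaling automorphisms $z_i\mapsto t z_i$ and the full torus $z_i\mapsto t_i z_i$, together with linear independence of characters (or a Vandermonde specialization) over the infinite field $\Bbbk$, to split any $f\in V$ into its homogeneous components and then into its individual monomials. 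This replaces the paper's degree-descent induction by a one-step eigenvector decomposition, and it also streamlines the propagation stage: where the paper expands $(z_1+cz_2)^{d_1+d_2}z_3^{d_3}\cdots z_m^{d_m}$ for many values of $c$ and inverts a Vandermonde matrix by hand, you can apply a single linear automorphism $z_1\mapsto z_1+z_2$ and invoke the same torus extraction. The propagation itself --- using $z_m\mapsto z_m+z_1^{d_1}\cdots z_{m-1}^{d_{m-1}}$ (where $m\geq 2$ enters) to manufacture monomials missing one variable, then linear maps to fill in the last exponent --- is exactly the paper's Claim 2. Your treatment of $m=1$ matches the paper's (which itself only sketches that case).

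One local slip to repair: in step (ii) you propose to produce $z_i\in V$ ``since $1\in V$'' by applying $z_i\mapsto z_i+c$ and subtracting; this does nothing, because every automorphism fixes $1$. The fix is already contained in your own machinery: starting from a monomial $z^{\alpha}\in V$ with $\alpha\neq 0$, apply translations $z_i\mapsto z_i+1$ and extract monomial components of the resulting element of $V$ to obtain all $z^{\beta}$ with $\beta\leq\alpha$ componentwise (in particular $1$ and some $z_i$), then use linear automorphisms to reach every $z_j$. With that rewording the argument goes through.
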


\begin{proof} (1) Let $B\neq 0, \Bbbk$ be an 
$\Aut$-stable subspace of $A$. We need to show 
that $B=A$.  

\medskip
\noindent
Claim 1: $B$ contains $V$ where $V:=\Bbbk+
\sum_{i=1}^{m} \Bbbk z_i$. 

\noindent
{\it {Proof of Claim 1:}}
Let $f\in B\setminus \Bbbk$. There is a standard 
filtration/grading of $A$ where $\deg z_i = 1$ 
for all $i$.  If $\deg f=1$, then 
up to an automorphism, we may assume $f=z_1$.
For any fixed $i_0,j_0$, there is an algebra 
automorphism that sends $z_{i_0}$ to $z_{j_0}$.
So $z_i\in B$ for all $i$. Considering the 
automorphism $\sigma: z_1\to z_1+1$ and
$z_i\to z_i$ for all $i>1$, we obtain that
$1=\sigma(z_1)-z_1\in B$. Thus, $B\supseteq V$ 
and Claim 1 holds. Now we use induction on 
$\deg f$ and suppose $\deg f\geq 2$. There
are two cases to consider. Case 1: there is an 
$i$ such that $d:=\deg_{z_i}(f)\geq 2$. In this 
case write $f$ as $\sum_{s=0}^{d} z_i^s h_s$ 
where $h_s$ are polynomials in $z_1,\cdots, 
\widehat{z_i},\cdots, z_m$. Case 2: 
$\deg_{z_j}(f)\leq 1$ for all $j$. Choose $i$ 
so that $\deg_{z_i}(f)=1$ and write $f$ as 
$z_i h_1+h_0$ where $h_0,h_1$ are polynomials 
in $z_1,\cdots, \widehat{z_i},\cdots, z_m$
with $h_1\not\in \Bbbk$. In both cases, let 
$\sigma$ be the isomorphism $\sigma: z_i\to 
z_i+1$ and $z_j\to z_j$ for all $j\neq i$. 
Then $g:= \sigma(f)-f \not\in \Bbbk$ and has 
degree at most $\deg f-1$. Since $B$ is 
$\Aut$-stable, $g\in B$. The assertion follows
by induction. 

\medskip
\noindent
Claim 2: When $m\geq 2$, every monomial is in $B$.

\noindent
{\it {Proof of Claim 2:}}
We
will consider only the case when $m$ is finite. So we need to show that $z_1^{d_1}\cdots z_{m}^{d_m}\in B$ for all $d_i$.
By Claim 1, $z_i\in B$. If $d_m=0$,
consider the automorphism $\tau: z_i\to z_i$
for all $i<m$ and $z_m\to z_m +z_1^{d_1}\cdots 
z_{m-1}^{d_{m-1}}$, one sees that $z_1^{d_1}
\cdots z_{m-1}^{d_{m-1}}=\tau(z_m)-z_m\in B$. 
Using automorphisms, this implies that if we 
have $m-1$ linearly independent elements 
$y_1,\cdots,y_{m-1}$ in $V_0=\sum_{i=1}^m 
\Bbbk z_i$, then 
$y_1^{d_1}\cdots y_{m-1}^{d_{m-1}}\in B$.
Let $c_1,\cdots, c_{n+1}$ be distinct 
scalars in $\Bbbk$. Then the determinant 
of the Vandermonde matrix 
\[\begin{pmatrix}
1 & c_1 & \cdots & c_{1}^{n-1} & c_1^{n}\\
1 & c_2 & \cdots & c_{2}^{n-1} & c_2^{n}\\
\vdots & \vdots & \ddots & \vdots & \vdots\\
1 & c_n & \cdots & c_{n}^{n-1} & c_n^{n}\\
1 & c_{n+1} & \cdots & c_{n+1}^{n-1} & c_{n+1}^{n}
\end{pmatrix}\] 
is nonzero. By linear algebra, 
$z_1^{d_1}z_2^{d_2} \cdots z_m^{d_m}$ is in 
the $\Bbbk$-linear span of elements, for different 
$c_1,\cdots, c_{n+1}\in \Bbbk$, 
$$(z_1+c_s z_2)^{d_1+d_2}z_{3}^{d_3}
\cdots z_{m}^{d_m}
=\sum_{j=0}^{d_1+d_2} c_s^j \binom{d_1+d_2}{j} z_1^{d_1+d_2-j} z_2^j z_3^{d_3}
\cdots z_{m}^{d_m},$$ 
which are in $B$. So Claim 2 holds. 

Part (1) follows from Claim 2.

(2) When $A=\Bbbk[z_1]$ we claim that if $f\in 
B\setminus \Bbbk$ has degree $d$, then $B$ 
contains $(\Bbbk+\Bbbk z_1)^{d}$. The proof is similar to the proof of part (1) where we did not assume $m\geq 2$, and the details are omitted.
\end{proof}

Based on the previous result, we ask the following question: 

\begin{ques}
\label{xxque3.4}
Are there other affine commutative algebras that 
have no nontrivial  $\Aut$-stable subspace?
\end{ques}

 The center of an algebra is clearly an $\Aut$-stable subalgebra.
Besides this, Theorem~\ref{xxexa3.2} shows that a noncommutative 
connected graded algebra does admit a nontrivial $\Aut$-stable subspace. 
However, it is also possible to find a noncommutative affine algebra 
with no nontrivial $\Aut$-stable subspace; see the next theorem.

\begin{thm}
\label{xxthm3.5}
Suppose ${\rm{char}}\; \Bbbk=0$.
\begin{enumerate}
\item[(1)]
The $n$th Weyl algebra has no nontrivial 
$\Aut$-stable subspace where $n\geq 1$.
\item[(2)]
Let $C$ be the tensor product of 
$A:=\Bbbk[z_1,\cdots,z_m]$
for $m\geq 2$ and the $n$th Weyl algebra. 
Then the only nontrivial  $\Aut$-stable subspace 
of $C$ is $A\otimes \Bbbk$,  which is the center 
of $C$.
\end{enumerate}
\end{thm}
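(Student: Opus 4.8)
The plan is to prove part (1) first, since part (2) will reduce to it combined with Theorem~\ref{xxthm3.3}(1). For part (1), I would mimic the structure of the proof of Theorem~\ref{xxthm3.3}(1), replacing the polynomial generators by the canonical generators $p_1,\dots,p_n,q_1,\dots,q_n$ of the Weyl algebra ${\mathbb A}_n$ subject to $[p_i,q_i]=1$. Let $B\neq\Bbbk$ be an $\Aut$-stable subspace; the goal is $B={\mathbb A}_n$. First I would fix a filtration (the Bernstein filtration, with all generators in degree one), so that $\gr{\mathbb A}_n\cong\Bbbk[z_1,\dots,z_{2n}]$ is the commutative polynomial ring, and use the ``shearing'' automorphisms $\sigma\colon g\mapsto g+c$ coming from the rich automorphism group of the Weyl algebra. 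The key point is that $\sigma(f)-f$ has strictly smaller degree than $f$ whenever $f\notin\Bbbk$, which lets an induction on degree pull $B$ down to the degree-one piece, and then push it back up to all of ${\mathbb A}_n$.

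The main work is locating the automorphisms that play the role of $z_i\mapsto z_i+1$ from the commutative case. Here I would use: (i) the symplectic linear group $\Sp_{2n}(\Bbbk)\subseteq\Aut({\mathbb A}_n)$ acting transitively on nonzero vectors in the degree-one space $V_0:=\sum_i(\Bbbk p_i+\Bbbk q_i)$, so that once $B$ meets $V_0$ nontrivially it contains all of $V_0$; (ii) the inner automorphism $g\mapsto \exp(\operatorname{ad} q_1)(g)=g+[q_1,g]+\tfrac{1}{2}[q_1,[q_1,g]]+\cdots$ (a finite sum, valid in characteristic zero), which sends $p_1\mapsto p_1-1$ and fixes the other generators, providing the degree-lowering ``translation'' that produces the scalar $1\in B$ and drives the induction; and (iii) the analogues of the triangular automorphisms $z_m\mapsto z_m+(\text{polynomial in the others})$, which in the Weyl algebra are the symplectic transvections together with the exponentials of ad of commuting elements. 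With these in hand, the degree induction proceeds exactly as in Theorem~\ref{xxthm3.3}(1): Claim~1 gives $V:=\Bbbk+V_0\subseteq B$, and a Vandermonde/binomial argument lifts every PBW monomial $p^{\alpha}q^{\beta}$ into $B$. I expect the main obstacle to be handling the noncommutativity carefully in the inductive step---verifying that the degree of $\sigma(f)-f$ genuinely drops and that the PBW leading terms behave as in the commutative model---since products no longer commute, one must work with the associated graded $\Bbbk[z_1,\dots,z_{2n}]$ via \eqref{E1.2.1} rather than with the monomials directly.

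For part (2), let $C=A\otimes B$ with $A=\Bbbk[z_1,\dots,z_m]$ and $B={\mathbb A}_n$. First I would check that $A\otimes\Bbbk=Z(C)$: since ${\mathbb A}_n$ has center $\Bbbk$ and $A$ is commutative, the center of the tensor product is $Z(A)\otimes Z(B)=A\otimes\Bbbk$, which is visibly $\Aut$-stable. To show it is the \emph{only} proper $\Aut$-stable subspace, I would argue that any $\Aut$-stable subspace $W$ is stable in particular under the subgroup $\Aut_{\Bbbk}(A)\times\Aut_{\Bbbk}(B)\subseteq\Aut_{\Bbbk}(C)$, and then invoke a Fubini-type statement for stable subspaces of a tensor product: a subspace stable under $G_1\times G_2$ on $U_1\otimes U_2$ decomposes according to the stable subspaces of each factor. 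Concretely, if $W$ contains any element with a nontrivial $B$-component, then applying the $\Aut({\mathbb A}_n)$-action and part~(1) forces $W$ to contain $A\otimes{\mathbb A}_n=C$; while if $W\subseteq A\otimes\Bbbk$, then applying Theorem~\ref{xxthm3.3}(1) to the $A$-factor forces $W$ to be $\{0\}$ or all of $A\otimes\Bbbk$. Hence the only proper nonzero $\Aut$-stable subspace is $A\otimes\Bbbk$. The delicate point here will be making the ``nontrivial $B$-component'' dichotomy precise: I would fix a basis of $A$ and expand $W$ as a sum of ``slices'' in $B$, using that mixed automorphisms $\sigma\otimes\tau$ act diagonally to separate the components, which is the step that genuinely uses that $A$ itself has no proper $\Aut$-stable subspaces by Theorem~\ref{xxthm3.3}(1).
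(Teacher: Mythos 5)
Your plan for part (1) is essentially the paper's proof: the paper also fixes the Bernstein-type filtration and uses a generating collection of explicit automorphisms (linear ones $\phi_M$ and the Fourier-type swap, which sit inside the symplectic group; the translations; the triangular maps $\alpha_h\colon y_1\mapsto y_1+h(x_1)$ and transvections $\beta_{i,c}\colon x_i\mapsto x_i+cy_i$), then runs the same degree induction and Vandermonde/binomial argument to capture all PBW monomials, handling the noncommutative correction terms by a secondary induction on degree. One terminological slip: $\exp(\operatorname{ad} q_1)$ is not an \emph{inner} automorphism (the units of the Weyl algebra are scalars); it is the exponential of a locally nilpotent derivation, which is indeed a valid automorphism in characteristic zero, so this does not affect your argument.

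Part (2), however, has a genuine gap. Your ``Fubini-type statement'' is false as stated, and more seriously, no argument confined to the product subgroup $\Aut_{\Bbbk}(A)\times\Aut_{\Bbbk}(B)\subseteq\Aut_{\Bbbk}(C)$ can prove the theorem. Concretely, the subspaces $\Bbbk\otimes B$ and $A\otimes\Bbbk+\Bbbk\otimes B$ are stable under every automorphism of the form $\sigma\otimes\tau$, are proper, and are different from $A\otimes\Bbbk$; the second one even ``contains elements with a nontrivial $B$-component'' yet is not pushed to all of $C$ by any product automorphism. So your dichotomy fails inside the group you allow yourself to use, and your torus-slicing device cannot rescue it: slicing only isolates pure tensors already present, and in $A\otimes\Bbbk+\Bbbk\otimes B$ every pure tensor has one scalar factor, so the slice sets it produces never meet the hypotheses of part (1) or of Theorem \ref{xxthm3.3}(1) in the way you need.

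The theorem is true only because $\Aut_{\Bbbk}(C)$ contains automorphisms that genuinely mix the two tensor factors, and this is exactly what the paper exploits: the map $x_1\mapsto x_1+z_1$ (used to normalize a degree-one element of $V$ to $z_1$, whence $A\otimes\Bbbk\subseteq V$), and crucially $\tau\colon y_1\mapsto y_1+z_1x_1$, which is an automorphism of $C$ precisely because $z_1$ is central. Applying $\tau$ to $x_1\in V$ produces the mixed pure tensor $z_1\otimes x_1\in V$, with \emph{both} factors nonscalar. Only then do the slice arguments close the proof: $S:=\{a\in A\mid a\otimes x_1\in V\}$ is $\Aut(A)$-stable and contains $z_1$, so $S=A$ by Theorem \ref{xxthm3.3}(1); then for each $x\in A$ the set $T_x:=\{b\in B\mid x\otimes b\in V\}$ is $\Aut(B)$-stable and contains $x_1\notin\Bbbk$, so $T_x=B$ by part (1), giving $V=C$, a contradiction. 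Without manufacturing such a mixed pure tensor---which requires automorphisms outside $\Aut(A)\times\Aut(B)$---the argument cannot get past the intermediate product-stable subspaces above.
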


\begin{proof} 
(1) Recall that the $n$th Weyl algebra is 
${\mathbb A}_n:=\Bbbk\langle x_1,\cdots, x_n, y_1, 
\cdots, y_n\rangle/(I)$ where $I$ is the ideal 
generated by relations $x_i x_j-x_j x_i=0$, 
$y_i y_j-y_jy_i=0$ and $x_i y_j-y_j x_i=\delta_{ij}$ 
for all $i,j$. There are some obvious automorphisms (their inverse maps are easy to see): 
\begin{enumerate}
\item[(aut1)]
$\tau : x_1 \to  2x_1, y_1 \to  \frac{1}{2} y_1$ and $x_j \to x_j$, $y_j  \to  y_j$ for all $j > 1$. 
\item[(aut2)]
For any permutation $\sigma\in S_{n}$, $f_{\sigma}: 
x_i\to x_{\sigma(i)}, y_i\to y_{\sigma(i)}$.
\item[(aut3)]
Let $M:=(m_{ij})$ be an invertible $n\times n$ 
matrix. Let $N:=(n_{ij})$ be the matrix $(M^{-1})^{\tau}$. Then 
$\phi_{M}: x_i\to \sum_{j=1}^n m_{ij} x_j, y_{k}\to 
\sum_{l=1}^n n_{kl} y_l$ is an automorphism 
of $B$. Both (aut1) and (aut2) are special cases 
of (aut3)
\item[(aut4)]
$\tau_{i_0}: x_{i_0}\to y_{i_0}, y_{i_0}\to -x_{i_0}$ 
and $x_j\to x_j, y_j \to y_j$ for all $j\neq i_0$. 
\item[(aut5)]
Let $h(x_1)$ be a polynomial in $x_1$. Define 
$\alpha_{h}: x_i\to x_i$ for all $i$, 
$y_1\to y_1+h(x_1), y_j\to y_j$ for all $j\neq 1$.
\item[(aut6)]
Let $0\neq c\in \Bbbk$ and $1\leq i\leq n$. Define
$\beta_{i,c}: x_i\to x_i+cy_i, y_{i}\to y_i$ and 
$x_j\to x_j, y_j\to y_j$ for all $j\neq i$.
\end{enumerate}

Let $V_x=\sum_{i=1}^n \Bbbk x_i$, $V_y=\sum_{i=1}^n 
\Bbbk y_i$, and $V=\Bbbk+ V_x+V_y$. Let $B$ be a 
nontrivial $\Aut$-stable subspace of 
${\mathbb A}_n$.

\medskip
\noindent
Claim 1: $V\subseteq B$. 

\noindent{\it {Proof of Claim 1:}}
Since $B$ is nontrivial, there is an element $f\in 
B\setminus \Bbbk$. Define $\deg (x_i)=\deg (y_i)=1$ 
for all $i$. Then $\deg f\geq 1$. If $\deg f=1$, 
write $f=\sum_{i=1}^n(a_i x_i+ b_i y_i) +c$. Without 
loss of generality, we assume that $a_1\neq 0$. 
Then $f_1:=\tau(f)-f= a_1 x_1+(-\frac{1}{2})b_1 y_1
\in B,$ where $\tau$ is the automorphism given in (aut1). 
Similarly, $f_2:=2\tau(f_1)-f_1=3a_1 x_1
\in B$. Therefore $x_1\in B$. Applying automorphism 
$f_{\sigma}$ in (aut2) for different $\sigma$, we 
obtain that $x_i\in B$ for all $i$. Applying 
$\tau _{i_0}$ in (aut4) we have $y_i\in B$ for all $i$. Thus 
$V_x, V_y\subseteq B$. Applying $(\alpha_{h}-Id)$ to 
$V_y$ for $h=1$ where $\alpha_{h}$ is given in (aut5), 
we obtain that $1\in B$. So $V\subseteq B$. Now we use induction on $\deg f$ to conclude that $V \subseteq B$. 
Assume that $\deg f > 1$. Then there are two possibilities: either, 
for some $t := x_i$ or $t := y_i$, we have 
$d := \deg_{t} f > 1$, or, for every  $t = x_i$ or $t = y_i$, 
$d := \deg_{t} f = 1$. 

In the first case, without loss of generality, we may suppose that  
$d = \deg_{y_1} f > 1$, and we write 
\[
   f = \sum_{s=0}^d (y_1)^s h_s,
\]
where $\deg_{y_1}(h_s) = 0$ for each $s$.  

In the second case   without loss of generality, we can find $h_1, h_0 $ such that 
$\deg_{y_1}(h_1) = 0 = \deg_{y_1}(h_0)$,   $h_1 \notin \kk$ and 
\[
   f = y_1 h_1 + h_0. 
\]
In either case, with  $h(x_1) = 1$ in (aut5), we obtain $\alpha_1$ 
and it is not difficult to see that $\alpha_1(f) - f \notin \kk$ has degree less than 
$\deg f$. Hence, we can apply our induction hypothesis to conclude that $V \subseteq B$.

\medskip
\noindent
Claim 2: For every $w\in V_x$ and every $d\geq 1$, 
$w^d\in B$. 

\noindent{\it {Proof of Claim 2:}}
Using automorphism $\phi_{M}$ (given in (aut3)) one 
may assume that $w=x_1$. The claim follows by 
applying $(\alpha_{h}-Id)$ to $V_y$ for $h=x_1^d$ 
(where $\alpha_{h}$ is given in (aut5)).

\medskip
\noindent
Claim 3: $x_1^{d_1}\cdots x_{n}^{d_n}\in B$ for all 
$d_i\geq 0$.

\noindent
{\it {Proof of Claim 3:}}
We prove that $x_1^{d_1}\cdots x_{i}^{d_i}\in B$ by 
induction on $i$. If $i=1$, this is a special
case of Claim 2. Now suppose that the claim holds 
for some $i<n$, namely, $x_1^{d_1}\cdots x_{i}^{d_i}
\in B$ for all $d_1,\cdots, d_i$. Consider the 
automorphism $\phi_{M}$ in (aut3) with $\phi_{M}: 
x_j\to x_j$ for all $j<i$ and $x_i\to x_i+c x_{i+1}$. 
Applying $\phi_{M}$ to $x_1^{d_1}\cdots 
x_{i}^{d_i+d_{i+1}}$ we obtain that
$x_1^{d_1}\cdots x_{i-1}^{d_{i-1}}
(x_{i}+c x_{i+1})^{d_i+d_{i+1}}\in B$. By the 
Vandermonde matrix trick (which was used in the 
proof of Theorem \ref{xxthm3.3}(1)), 
$x_1^{d_1}\cdots x_{i}^{d_{i}} x_{i+1}^{d_{i+1}}\in B$ 
as required.

\medskip
\noindent
Claim 4: $x_1^{d_1} y_1^{t_1}\cdots x_{n}^{d_n} 
y_{n}^{t_n}\in B$ for all $d_i, t_i\geq 0$.

\noindent{\it {Proof of Claim 4:}}
We prove that $(x_1^{d_1}y_{1}^{t_1}\cdots 
x_{i-1}^{d_{i-1}}y_{i-1}^{t_{i-1}}) x_{i}^{d_i} 
\cdots x_{n}^{d_n}\in B$ by induction on $i$. If 
$i=1$, this is Claim 3. Now suppose that the claim 
holds for some $i\leq n$. After changing 
$d_i$ to $d_i+t_i$, we can assume that  
$(x_1^{d_1}y_{1}^{t_1}\cdots 
x_{i-1}^{d_{i-1}}y_{i-1}^{t_{i-1}}) 
x_{i}^{d_i+t_i} x_{i+1}^{d_{i+1}} 
\cdots x_{n}^{d_n}\in B$
for all $d_1,\cdots, d_n$ and all 
$t_1,\cdots, t_{i}$. Applying $\beta_{i,c}$ 
(see (aut6)) to the above element we have that 
\begin{equation}
\label{E3.4.1}\tag{E3.4.1}
(x_1^{d_1}y_{1}^{t_1}\cdots 
x_{i-1}^{d_{i-1}}y_{i-1}^{t_{i-1}}) 
(x_{i}+c y_{i})^{d_i+t_i} x_{i+1}^{d_{i+1}} 
\cdots x_{n}^{d_n}
\in B.
\end{equation}
We will show that $x_i^{d_i} y_i^{t_i}$ 
is in $W$, where $W$ is the $\Bbbk$-linear span 
of all elements $(x_i+c y_i)^{d'}$ for all 
different $c\in \Bbbk$ and $0\leq d'\leq d: 
=d_i+t_i$. We use induction on $d$. 
If $d = 0$, this is a special case of 
Claim 1. Next let $d > 0$. Then, 
using the relation $y_ix_i=x_i y_i-1$, we have 
$$
(x_i+c y_i)^d=
\sum_{s=0}^{d} c^{d-s}\binom{d}{s}  x_i^{s} y_i^{d-s}
+{\text{lower degree terms.}} 
$$
By induction, lower degree terms are in 
$W$. By the Vandermonde matrix trick (see the 
proof of Theorem \ref{xxthm3.3}(1)),
$x_i^{s} y_{i}^{d-s}\in W$ for all $s$. Replacing 
the middle part $(x_{i}+c y_{i})^{d_i+t_i}$ of 
\eqref{E3.4.1} by $x_i^{s} y_i^{d-s}$ we finish 
the original induction. Claim 4 follows by 
setting $i=n+1$.  

Part (1) follows from Claim 4.

(2) Since the center of the algebra $C(:=A\otimes {\mathbb A}_n)$ 
is $A\otimes \Bbbk$ and since the center of any algebra 
is an $\Aut$-stable subspace, $A\otimes \Bbbk$ is 
an $\Aut$-stable subspace of $C$. Let $B\subseteq C$ 
be a nontrivial $\Aut$-stable subspace of $C$. 

\medskip
\noindent
Claim 1: If $B\neq A\otimes \Bbbk$, then $x_1\in B$.

\noindent{\it Proof of Claim 1:}
By the notation introduced in the proof of Part~(1), let  $
   {\mathbb A}_n := \Bbbk\langle x_1, \ldots, x_n, 
   y_1, \ldots, y_n \rangle / (I),$ and $ 
   A := \Bbbk[z_1, \ldots, z_m]$, for some $ m \geq 2$. 
 We 
define $\deg x_i=\deg y_i=\deg z_j=1$ for all 
$i,j$, 
so that $C$ is a filtered algebra.

By the argument in the proof of Theorem 
\ref{xxthm3.3} and the proof of part (1), 
there is an element $f\in B$ of degree 1. 
By identifying $A\otimes \Bbbk$ and $ A$ and similarly 
$ \kk \otimes {\mathbb A}_n $ with ${\mathbb A}_n$, we 
can write it as either $f=z_1 +  x_1$ or $f=z_1$ up 
to an automorphism. In the first case, consider 
an automorphism of $C$, $\phi: z_j\to z_j,
x_i\to x_i+\delta_{i1} z_1 , y_i\to y_i$ for all $i,j$.
Then $z_1=\phi(f)-f\in B$. In both cases we can 
assume that $z_1\in B$. 

Since $A$ has no nontrivial $\Aut$-stable subspace 
[Theorem \ref{xxthm3.3}(2)], $B$ contains 
$A\otimes \Bbbk$. By hypothesis, $V$ strictly 
contains $A\otimes \Bbbk$ as a subspace. By an 
argument that is similar to the one given in the 
proof of Theorems \ref{xxthm3.3} and the proof 
of part (1), there is an element $f\in B$ of 
degree 1 that is not in $A\otimes \Bbbk$. Then 
we can assume $f=x_1+z_1$. Up to an automorphism, 
one may further assume that $f=x_1\in B$. 

\medskip
\noindent
Claim 2: $B=A\otimes \Bbbk$.

\noindent
{\it Proof of Claim 2:} Suppose to the contrary that
$B\neq A\otimes \Bbbk$. By Claim 1 and its proof,
$B\supseteq A\otimes \Bbbk$ and $x_1\in B$.

Since ${\mathbb A}_n$ has no nontrivial $\Aut$-stable 
subspaces, $\Bbbk \otimes {\mathbb A}_n\subseteq B$. 
Consider the automorphism
$\tau: z_j\to z_j, x_i\to x_i, y_i\to y_i+\delta_{1i} 
z_1 \otimes x_1$ for all $j,i$. One sees that 
$z_1\otimes x_1=\tau(x_1)-x_1\in B$. Let 
$S:=\{a\in A\mid a\otimes x_1\in B\}$. Since $B$ 
is an $\Aut$-stable subspace of $A\otimes {\mathbb A}_n$, 
$S$ is an $\Aut$-stable subspace of $A$ which 
contains $z_1$. By Theorem \ref{xxthm3.3}(2), $S=A$, 
or $A\otimes x_1\subseteq B$. For every $x\in A$, 
let $T_{x}:=\{b\in {\mathbb A}_n\mid 
x\otimes b\in B\}$. Since $x_1\in T_{x}$, 
$T_{x}={\mathbb A}_n$ by part (1). Thus 
$B=A\otimes {\mathbb A}_n$, yielding a 
contradiction. Therefore $B=A\otimes \Bbbk$.
\end{proof}

We wish to use $\Aut$-stable subspaces to 
characterize polynomial rings, aiming to 
answer Question \ref{xxque0.10}. Here is a  
version of such a characterization.

\begin{thm}
\label{xxthm3.6}
Suppose $\Bbbk$ is an algebraically closed 
field of characteristic zero and $A$ is an 
algebra. Then $A$ is isomorphic to 
$\Bbbk[z_1,\cdots,z_m]$ for some integer $m\geq 2$ 
if and only if the following two conditions hold:
\begin{enumerate}
\item[(1)]
$A\neq \Bbbk$ is affine and connected graded,
\item[(2)]
$A$ has no nontrivial $\Aut$-stable subspace.
\end{enumerate}
\end{thm}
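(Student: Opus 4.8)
The plan is to prove the two directions separately. The forward direction (that $\Bbbk[z_1,\dots,z_m]$ for $m\ge 2$ satisfies (1) and (2)) is immediate: the polynomial ring is finitely generated and connected graded under the standard grading, so (1) holds, and (2) is exactly the content of Theorem \ref{xxthm3.3}(1). The substance of the theorem is therefore the converse: assuming $A\ne\Bbbk$ is finitely generated connected graded with no proper $\Aut$-stable subspaces, I must show $A$ is a commutative polynomial ring in at least two variables.

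For the converse, I would first exploit the grading. Since $A$ is connected graded, each graded piece $A_n$ and more importantly each truncation $A_{\ge d}=\bigoplus_{n\ge d}A_n$ is preserved by every \emph{graded} automorphism; but to use condition (2) I need $\Aut$-stability under all automorphisms, so the key early step is to produce enough automorphisms. The natural move is to observe that the commutator subalgebra $B$ generated by all $xy-yx$ is $\Aut$-stable (as in Example \ref{xxexa3.2}), and since $A$ has no proper $\Aut$-stable subspace we must have $B=\Bbbk$ or $B=A$; the connected-graded estimate $B_{\ge1}\subseteq (A_{\ge1})^2$ forces $B\ne A$, hence $B=\Bbbk$, so \textbf{$A$ is commutative}. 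This is the first decisive reduction: condition (2) plus the connected-graded structure rules out noncommutativity outright.

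Next, with $A$ commutative, finitely generated, connected graded, and $\Bbbk$ algebraically closed, I would argue that $A$ must in fact be a polynomial ring. The degree-one piece $V:=A_1$ is a finite-dimensional $\Aut$-stable subspace (graded automorphisms preserve it, and I would need to check it is stable under \emph{all} automorphisms, using that $A$ is generated in degree one or reducing to that case), so by (2) either $A_1=0$ or $A_1$ spans enough to generate. Since $A\ne\Bbbk$ and $A$ is connected graded and finitely generated, I can take a minimal generating set in positive degrees; the subspace they span, together with its images under automorphisms, must be all of $A$ in low degrees. The cleanest route is: let $\mathfrak{m}=A_{\ge1}$ and consider $\mathfrak{m}^2$, which is $\Aut$-stable; if $\mathfrak{m}^2\ne\mathfrak{m}$ then $\mathfrak{m}^2$ is a proper nonzero $\Aut$-stable subspace unless $\mathfrak{m}^2=\Bbbk$ (impossible) — so I extract that $A$ is generated in a single degree and has no relations among generators in that degree, forcing $A\cong\Bbbk[z_1,\dots,z_m]$ with the $z_i$ of equal degree. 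Finally, rescaling shows the generators may be taken in degree one, giving the standard polynomial ring; the condition $m\ge2$ follows because $m=1$ is excluded by Theorem \ref{xxthm3.3}(2), which exhibits proper $\Aut$-stable subspaces of $\Bbbk[z_1]$.

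The main obstacle I anticipate is the passage from ``$\Aut$-stable under graded automorphisms'' to ``$\Aut$-stable under all automorphisms,'' since condition (2) gives the strong hypothesis but the natural invariant subspaces (such as $A_1$ or $\mathfrak m^2$) are manifestly stable only under graded automorphisms. Bridging this gap requires showing that the relevant subspaces are genuinely $\Aut$-stable, which likely uses that for a connected graded commutative domain the top-degree grading filtration is canonical, or that any automorphism of $A$ respects the unique maximal graded ideal $\mathfrak m$ (as $\mathfrak m$ is the set of non-units together with $0$, an intrinsic notion). Once the commutativity reduction and the intrinsic characterization of $\mathfrak m$ and its powers are in hand, identifying $A$ with a polynomial ring and pinning down $m\ge2$ should follow from the minimal-generator count together with the absence of relations forced by (2).
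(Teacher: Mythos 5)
Your forward direction and your first reduction --- commutativity of $A$ via the commutator subalgebra of Example \ref{xxexa3.2} --- are correct and coincide with the paper's proof. The rest of your converse, however, rests on a false premise. You assert that $\mathfrak{m}:=A_{\geq 1}$, $\mathfrak{m}^2$, and $A_1$ are $\Aut$-stable. They are stable only under \emph{graded} automorphisms, and both of your proposed bridges fail: $\mathfrak{m}$ is \emph{not} ``the set of non-units together with $0$'' (that description characterizes the maximal ideal of a ring that is local as an abstract ring, which $A$ need not be); in $\Bbbk[z_1,\dots,z_m]$ the non-units do not even form a subspace, and the automorphism $z_1\mapsto z_1+1$, $z_i\mapsto z_i$ for $i\geq 2$ sends $z_1\in\mathfrak{m}$ to $z_1+1\notin\mathfrak{m}$. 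Similarly, the degree filtration is not preserved by automorphisms such as $z_1\mapsto z_1+z_2^2$. In fact your claim is self-defeating: if ``$\mathfrak{m}^2$ is $\Aut$-stable'' held for every commutative finitely generated connected graded algebra, it would hold for $\Bbbk[z_1,\dots,z_m]$ itself, where $\mathfrak{m}^2$ is nonzero and unequal to both $\Bbbk$ and $A$, contradicting Theorem \ref{xxthm3.3}(1). So under hypothesis (2) the subspaces you want to exploit are exactly the ones that \emph{cannot} be $\Aut$-stable, and your proposed extraction of ``generated in a single degree with no relations'' has no mechanism behind it.

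The paper's proof proceeds quite differently after the commutativity step, and the difference is precisely the point you struggled with: instead of grading-defined subspaces, it uses ideals defined intrinsically from the ring structure, which are therefore automatically $\Aut$-stable. If $\GKdim A=0$, then $A$ really is local and $A_{\geq 1}$ is its Jacobson radical, giving a proper $\Aut$-stable subspace, a contradiction. If $\GKdim A\geq 1$, the nilradical and then the sum of the minimal primes are $\Aut$-stable graded ideals contained in $A_{\geq 1}$, hence must vanish, so $A$ is a domain; next, the ideal cutting out the singular locus of $\Spec A$ is $\Aut$-stable and proper if nonzero, so it must be all of $A$, i.e., $A$ is regular. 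At this point a genuine external ingredient is required --- the theorem that a regular connected graded commutative algebra is a polynomial ring \cite[Theorem 1.4]{Ro-2024} --- and finally $m\neq 1$ is ruled out by Theorem \ref{xxthm3.3}(2). Your proposal is missing both the intrinsic-ideal idea (radical, minimal primes, singular locus) and this concluding structure theorem, so the gap is not a technical detail but the core of the converse.
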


\begin{proof}
If $A$ is the polynomial ring $\Bbbk[z_1,\cdots,z_m]$ 
for some $m\geq 2$, then it is finitely generated and 
connected graded. By Theorem \ref{xxthm3.3}, it has no 
nontrivial $\Aut$-stable subspace. So (1) and (2) hold.

Conversely, assume that $A$ is an algebra satisfying 
(1) and (2). We need to show that 
$A\cong \Bbbk[z_1,\cdots,z_m]$ for some integer $m\geq 2$. 
By Example \ref{xxexa3.2}, $A$ is commutative.

Let $A\neq \Bbbk$ be an affine connected graded
commutative algebra. Let $R$ be the prime radical 
(also equal to the nilradical in this case) of $A$. 
Since $R$ is $\Aut$-stable, it must be zero, 
or equivalently, $A$ is semiprime. Let $
\{{\mathfrak p}_1, \cdots, {\mathfrak p}_n\}$
be the set of all minimal primes. Since $A$ is 
graded, so are ${\mathfrak p}_i$ for all $i$.
Thus $V:=\sum_{i=1}^{n} {\mathfrak p}_i$ is an
$\Aut$-stable subspace that is a subspace of
$A_{\geq 1}$. Hence $V=0$ which implies that $A$ 
is a domain. We claim that $A$ is regular 
(namely, $A$ has finite global dimension). Suppose to 
the contrary that $A$ is not regular. Let $S$ be 
the non-empty singular locus of $\Spec A$ which 
is a closed subscheme of $\Spec A$.
Since $A$ is 
an affine domain, there is a nonzero ideal 
$I\subsetneq A$ such that $S =\Spec A/I$. So 
$I$ is an $\Aut$-stable subspace of $A$. Hence 
$I=A$, yielding a contradiction. Therefore $A$ 
is regular. Since every regular connected graded 
commutative algebra is isomorphic to 
$\Bbbk[z_1,\cdots, z_m]$ for some $m\geq 1$ 
\cite[Theorem 1.4]{Ro-2024}, the assertion follows 
by verifying $m\neq 1$ [Theorem \ref{xxthm3.3}(2)].
\end{proof}

\begin{remark}
\label{xxrem3.7} 
We have the following comments about the above 
theorem.
\begin{enumerate}
\item[(1)]
$\Bbbk[x_1]$ has no  nontrivial $\Aut$-stable 
subalgebra, but has nontrivial $\Aut$-stable 
subspaces, see Theorem {\rm{\ref{xxthm3.3}(2)}}. 
\item[(2)]
If ${\text{char}}\; \Bbbk=p>0$, then 
$\Bbbk[z_1,\cdots,z_m]$ has $\Aut$-stable 
subalgebras $\Bbbk[z_1^{p^d},\cdots,z_{m}^{p^{d}}]$
for each $d\geq 1$.
\item[(3)]
When $A$ is noncommutative, condition {\rm{(1)}} 
in Theorem {\rm{\ref{xxthm0.12}}} does not 
follow from condition {\rm{(2)}}, see Theorem 
{\rm{\ref{xxthm3.5}(1)}}.
\item[(4)]
When $A$ is an affine commutative 
ring, we conjecture that condition {\rm{(1)}} 
in Theorem {\rm{\ref{xxthm0.12}}} follows 
from condition {\rm{(2)}}. If this is the case, 
then a commutative algebra $A\neq \Bbbk$ is 
$\Bbbk[z_1,\cdots,z_m]$ for some $m\geq 2$ 
if and only if $A$ does not have any nontrivial 
$\Aut$-stable subspaces.
\end{enumerate}
\end{remark}

\subsection*{Acknowledgments.} 
Nazemian was supported by the Austrian Science Fund (FWF), grant P 36742. She would also like to thank the organizers of the “Poisson Geometry and Artin–Schelter Regular Algebras Workshop,” Hangzhou, for the invitation where this joint work was initiated. Wang was supported by the NSF of China (No. 11971289).
Zhang was partially supported by the US National Science Foundation
(No. DMS-2302087).  The authors thank the two anonymous referees for their careful reading of the manuscript.

\bibliography{main}

\providecommand{\bysame}{\leavevmode\hbox to3em{\hrulefill}\thinspace}
\providecommand{\MR}{\relax\ifhmode\unskip\space\fi MR }
% \MRhref is called by the amsart/book/proc definition of \MR.
\providecommand{\MRhref}[2]{%
  \href{http://www.ams.org/mathscinet-getitem?mr=#1}{#2}
}
\providecommand{\href}[2]{#2}
\begin{thebibliography}{10}

\bibitem{AEH-1972}
S.~S. Abhyankar, W.~Heinzer, and P.~Eakin, \emph{On the uniqueness of the
  coefficient ring in a polynomial ring}, J. Algebra \textbf{23} (1972),
  310--342. \MR{306173}

\bibitem{AS-1995}
M.~Artin and J.~T. Stafford, \emph{Noncommutative graded domains with quadratic
  growth}, Invent. Math. \textbf{122} (1995), no.~2, 231--276. \MR{1358976}

\bibitem{As-1987}
T.~Asanuma, \emph{Polynomial fibre rings of algebras over {N}oetherian rings},
  Invent. Math. \textbf{87} (1987), no.~1, 101--127. \MR{862714}

\bibitem{BZ1-2017}
J.~Bell and J.~J. Zhang, \emph{Zariski cancellation problem for noncommutative
  algebras}, Selecta Math. (N.S.) \textbf{23} (2017), no.~3, 1709--1737.
  \MR{3663593}

\bibitem{Fu-1979}
T.~Fujita, \emph{On {Z}ariski problem}, Proc. Japan Acad. Ser. A Math. Sci.
  \textbf{55} (1979), no.~3, 106--110. \MR{531454}

\bibitem{GWY2022}
J.~Gaddis, X.~Wang, and D.~Yee, \emph{Cancellation and skew cancellation for
  {P}oisson algebras}, Math. Z. \textbf{301} (2022), no.~4, 3503--3523.
  \MR{4449718}

\bibitem{Gu2}
N.~Gupta, \emph{On the cancellation problem for the affine space {$\Bbb{A}^3$}
  in characteristic {$p$}}, Invent. Math. \textbf{195} (2014), no.~1, 279--288.
  \MR{3148104}

\bibitem{Gu1}
\bysame, \emph{On {Z}ariski's cancellation problem in positive characteristic},
  Adv. Math. \textbf{264} (2014), 296--307. \MR{3250286}

\bibitem{Gu3-2015}
\bysame, \emph{A survey on {Z}ariski cancellation problem}, Indian J. Pure
  Appl. Math. \textbf{46} (2015), no.~6, 865--877. \MR{3438041}

\bibitem{HTW-2024}
H.~Huang, X.~Tang, and X.~Wang, \emph{A survey on {Z}ariski cancellation
  problems for noncommutative and {P}oisson algebras}, Recent advances in
  noncommutative algebra and geometry, Contemp. Math., vol. 801, Amer. Math.
  Soc., [Providence], RI, [2024] \copyright 2024, pp.~125--141. \MR{4756381}

\bibitem{Kr-1996}
H.~Kraft, \emph{Challenging problems on affine {$n$}-space}, S\'eminaire
  Bourbaki, Vol.\ 1994/95, no. 237, Soci\'et\'e Math\'ematique de France, 1996,
  pp.~Exp. No. 802, 5, 295--317. \MR{1423629}

\bibitem{KL-2000}
G.~Krause and T.~H. Lenagan, \emph{Growth of algebras and {G}elfand-{K}irillov
  dimension}, revised ed., Graduate Studies in Mathematics, vol.~22, American
  Mathematical Society, Providence, RI, 2000. \MR{1721834}

\bibitem{LWZ2019}
O.~Lezama, Y.~Wang, and J.~J. Zhang, \emph{Zariski cancellation problem for
  non-domain noncommutative algebras}, Math. Z. \textbf{292} (2019), no.~3-4,
  1269--1290. \MR{3980292}

\bibitem{MR-2001}
J.~C. McConnell and J.~C. Robson, \emph{Noncommutative {N}oetherian rings},
  revised ed., Graduate Studies in Mathematics, vol.~30, American Mathematical
  Society, Providence, RI, 2001, With the cooperation of L. W. Small.
  \MR{1811901}

\bibitem{MS-1980}
M.~Miyanishi and T.~Sugie, \emph{Affine surfaces containing cylinderlike open
  sets}, J. Math. Kyoto Univ. \textbf{20} (1980), no.~1, 11--42. \MR{564667}

\bibitem{Ro-2024}
D.~Rogalski, \emph{Artin-{S}chelter regular algebras}, Recent advances in
  noncommutative algebra and geometry, Contemp. Math., vol. 801, Amer. Math.
  Soc., [Providence], RI, [2024] \copyright 2024, pp.~195--241. \MR{4756385}

\bibitem{RS-2013}
L.~Rowen and D.~J. Saltman, \emph{Tensor products of division algebras and
  fields}, J. Algebra \textbf{394} (2013), 296--309. \MR{3092723}

\bibitem{Ru-1981}
P.~Russell, \emph{On affine-ruled rational surfaces}, Math. Ann. \textbf{255}
  (1981), no.~3, 287--302. \MR{615851}

\bibitem{TV-1996}
J.~Tate and M.~Van~den Bergh, \emph{Homological properties of {S}klyanin
  algebras}, Invent. Math. \textbf{124} (1996), no.~1-3, 619--647. \MR{1369430}

\end{thebibliography}
\bibliographystyle{amsplain}

\end{document}